\def\qed{\hfill$\Box$}
\newproof{proof}{Proof}
\newtheorem{theorem}{\textbf{Theorem}}
\newtheorem{corollary}[theorem]{\textbf{Corollary}}
\newtheorem{lemma}[theorem]{\textbf{Lemma}}
\newtheorem{observation}[theorem]{\textbf{Observation}}
\newtheorem{proposition}[theorem]{\textbf{Proposition}}
\newtheorem{defn}[theorem]{\textbf{Definition}}
\newtheorem{claimm}[theorem]{\textbf{Claim}}
\newtheorem*{statement}{\textbf{Statement~}}
\newproof{pot1}{Proof of Theorem \ref{thm:constup}}
\journal{Discrete Applied Mathematics}
\begin{document}
\begin{frontmatter}



\title{Bisecting and $D$-secting families for set systems}


\author[label0]{Niranjan Balachandran \fnref{ack1}}
 \ead{niranj@iitb.ac.in}
 \address[label0]{Department of Mathematics, Indian Institute of Technology, Bombay 400076, India\fnref{label3}}
 \fntext[ack1]{The research of the author is supported by grant
 	12IRCCSG016, IRCC, IIT Bombay.}
 
 \author[label1]{Rogers Mathew}
 \ead{rogers@cse.iitkgp.ernet.in}
 \author[label1]{Tapas Kumar Mishra \corref{cor1}\fnref{ack}}
 \ead{tkmishra@cse.iitkgp.ernet.in}
 \fntext[ack]{The research of the author is supported by the doctoral fellowship program of Ministry of Human Resources and Development, Govt. of India.}
 
 \author[label1]{Sudebkumar Prasant Pal}
 \ead{spp@cse.iitkgp.ernet.in}
 
 \cortext[cor1]{Corresponding author}
 \address[label1]{Department of Computer Science and Engineering, Indian Institute of Technology, Kharagpur 721302, India}


\begin{abstract}
Let $n$ be any positive integer and $\mathcal{F}$ be a family of subsets of $[n]$.
A family $\mathcal{F}'$ is said to be $D$-\emph{secting} for $\mathcal{F}$ if
for every $A \in \mathcal{F}$, there exists a subset $A' \in \mathcal{F}'$ such that $|A \cap A'| - |A \cap ([n] \setminus A')|=i$,
where $i \in D$, $D \subseteq \{-n,-n+1,\ldots,0,\ldots,n\}$. 
A $D$-\emph{secting} family $\mathcal{F}'$ of $\mathcal{F}$, where $D=\{-1,0,1\}$, is a \emph{bisecting} family  
ensuring  
the existence of a subset $A' \in \mathcal{F}'$ such that $|A \cap A'| \in \{\lceil \frac{|A|}{2}\rceil,\lfloor \frac{|A|}{2}\rfloor\}$, for each $A \in \mathcal{F}$.
In this paper, we study 
$D$-secting families for  $\mathcal{F}$ with restrictions
on $D$, and the cardinalities of $\mathcal{F}$ and the subsets of $\mathcal{F}$.
\end{abstract}

\begin{keyword}
Discrepancy \sep Hypergraphs \sep Separating family \sep Bisecting families
\PACS 02.10.Ox
\MSC[2010] 05D05  \sep 05C50 \sep 05C65  
\end{keyword}

\end{frontmatter}


\section{Introduction}
\label{sec:intro}

Let $n$ be any positive integer and $\mathcal{F}$ be a family of subsets of $[n]$.
Another family $\mathcal{F'}$ of subsets of $[n]$ is called a \textit{bisecting family} for $\mathcal{F}$,
if for each subset $A \in \mathcal{F}$, there exists a subset $A' \in \mathcal{F'}$
such that $|A \cap A'| \in \{\lceil \frac{|A|}{2}\rceil,\lfloor \frac{|A|}{2}\rfloor\}$.
What is the minimum cardinality of a bisecting family  for any family $\mathcal{F}$?
We pose a more general problem based on the difference between $|A \cap A'|$ and $|A \cap ([n] \setminus A')|$.
We say a family  $\mathcal{F}'$ is $D$-\emph{secting} for  $\mathcal{F}$ if
for each subset $A \in \mathcal{F}$, there exists a subset $A' \in \mathcal{F'}$ such that $|A \cap A'| - |A \cap ([n] \setminus A')|=i$,
where $i \in D$, $D \subseteq \{-n,-n+1,\ldots,0,\ldots,n\}$.
Let $\beta_D(\mathcal{F})$ denote the minimum cardinality of a 
$D$-secting family for  $\mathcal{F}$.
In particular, when $D=\{-1,0,1\}$, the family $\mathcal{F}'$ becomes a bisecting family for $\mathcal{F}$.
We study two  cases depending on $D$: (i) $D=\{-i,-i+1,\ldots,0,\ldots,i\}$, and (ii) $D=\{i\}$, for some $i \in [n]$.
Observe that if $D=\{i\}$, only those sets $A \in \mathcal{F}$ for which
$|A|  \cong i \pmod 2$ can attain a value of $i$ for $|A \cap A'| - |A \cap ([n] \setminus A')|$. So, we consider only those 
sets for which $|A|  \cong i \pmod 2$, when $D=\{i\}$.
We define $\beta_D(n)$ as the maximum of $\beta_D(\mathcal{F})$ over all families $\mathcal{F}$ on $[n]$ and
$\beta_D(n,k)$ as the maximum of $\beta_D(\mathcal{F})$ over all families $\mathcal{F} \subseteq \binom{[n]}{k}$.
When $D=\{i\}$ ($D=\{-i,-i+1,\ldots,i\}$), we sometimes abuse the notation to denote $\beta_D(\mathcal{F})$ by $\beta_i(\mathcal{F})$ (resp., $\beta_{[\pm i]}(\mathcal{F})$).

Consider an example family $\mathcal{F}$ which consists of all the $4$-element subsets of $\{1,\ldots,6\}$.
Note that since each subset $A \in \mathcal{F}$ has an even cardinality, $\beta_0(\mathcal{F})=\beta_{[\pm 1]}(\mathcal{F})$.
Let $\mathcal{F}'=\{\{1,2,3\},\{1,2,4\},\{1,3,5\}\}$. 
It is not hard to verify that 
every 4-element subset $A \in \mathcal{F}$ is bisected by at least one element in $\mathcal{F}'$.
So, $\beta_0(\mathcal{F}) \leq 3$, for $\mathcal{F}=\binom{[6]}{4}$.
In fact there is no pair of subsets of $\{1,\ldots,6\}$ such that every 4-element subset $A \in \mathcal{F}$ is bisected by 
one of them, which is asserted by Proposition \ref{prop:n2}.
Therefore, $\beta_0(\mathcal{F}) = 3$.

\subsubsection*{Discrepancy and $D$-secting families}
Bisecting families may also be interpreted in terms of `discrepancy' of hypergraphs under multiple bicolorings.
Let $G(V,E)$  be a hypergraph with vertex set $V=\{v_1,\ldots,v_n\}$ and hyperedge set $E=\{e_1,\ldots,e_m\}$.
Given a bicoloring $X$, $X : V \rightarrow \{-1,+1\}$,
let $\mathbb{C}_{X}(e)= |\sum_{v \in e} X(v)|$ denote the discrepancy of the hyperedge $e$ under the bicoloring $X$.
Then, the \emph{discrepancy} of the hypergraph $G$, denoted by  $disc(G)$, is defined as
$disc(G)= \min_{X} \max_{e \in E} \mathbb{C}_X(e)$.
For definitions, results, and extensions of discrepancy and related problems,
see \cite{chaz2000,mat1999,chen2014,beck1996}.
Below, we define $\beta_{D}(E)$ in terms of the discrepancy of a hypergraph $G(V,E)$, where $D=[\pm i]$.
Let $t \in \mathbb{N}$ be the minimum number such that there exists a set of $t$ hypergraphs 
$G_1,\ldots, G_t$ on vertex set $V=[n]$ with
(i) $disc(G_j) \in {[\pm i]} $, for $1\leq j \leq t$, and,
(ii) $\cup_{j=1}^t G_j = G(V,E)$.
Given an optimal $D$-secting family $\mathcal{F}'$ of $E$, it is easy to construct a set of hypergraphs 
$G_1,\ldots, G_{|\mathcal{F}'|}$ satisfying the above conditions.
Again, given a set of $t$ hypergraphs 
$G_1,\ldots, G_t$ satisfying conditions (i) and (ii) under bicolorings $X_1,\ldots,X_t$, respectively,
let $(A_j^{+1},A_j^{-1})$ be the bipartition of $V$ formed by the bicoloring $X_j$.
Then, $\mathcal{F'}=\{A_1^{+1},\ldots,A_t^{+1}\}$  is a $D$-secting family for $E$.
Thus, $\beta_{[\pm i]}(E)=t$.
Moreover, the discrepancy of a hypergraph $G([n],E)$ can be defined in terms of  $\beta_{[\pm i]}(E)$ as follows.
The \emph{discrepancy} of a hypergraph $G([n],E)$ is the minimum $i \in \mathbb{N}$ such that $\beta_{[\pm i]}(E)=1$.

\subsubsection*{Separating and bisecting families}

Given a family $\mathcal{F}$ of subsets of $[n]$, finding another family $\mathcal{F}'$ with certain properties has been well investigated.
One of the most studied problem in this direction is the computation of \emph{separating families}.
Let $\mathcal{F}$ consist of pairs $\{i,j\}$, $i,j \in \mathbb{N}$, $i \neq j$ and $\mathcal{F}'=\{A_1',\ldots,A_t'\}$
be another family of subsets on $[n]$ ($\mathcal{F}$ can be viewed as the edge set of a graph on vertex set $[n]$).
A subset $A_l'$ separates a pair $\{i,j\}$ if $i \in A_l'$ and $j \not\in A_l'$ or vice versa, $l \in [t]$.
The family $\mathcal{F}'$ is a separating family for $\mathcal{F}$ if every pair $\{i,j\} \in \mathcal{F}$ is separated by some  $A' \in \mathcal{F}'$. 
It is easy to see that $\mathcal{F}'$ is indeed a bisecting family for $\mathcal{F}$.
Let $f(n)$ denote the size of a minimum separating family $\mathcal{F}'$ for a family $\mathcal{F}$
consisting of all the $\binom{n}{2}$ pairs (edge set of a complete graph on $n$ vertices).
R\'{e}nyi \cite{renyi1961} proved that $f(n)=\lceil \log_2 n\rceil$.
Observe that $f(n)$ is the minimum  number of bipartite graphs needed to cover the edges of a complete graph $K_n$.
We note the following generalization of the above statement for arbitrary graphs.
\begin{proposition}[Folklore]
	\label{lemma:bc}
	Let $\chi(G)$ denote the chromatic number of graph $G$.
	Then, $\lceil \log_2 \chi(G) \rceil$
	bipartite graphs are necessary and sufficient to cover the edges of $G$. 
\end{proposition}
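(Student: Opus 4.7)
The plan is to prove the two directions (sufficiency and necessity) separately, using a binary encoding of colors to go between vertex colorings and bipartite decompositions.

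For sufficiency, I would start from a proper coloring of $G$ with $\chi(G)$ colors and assign each color a distinct binary label of length $\lceil \log \chi(G) \rceil$. For each bit position $j \in \{1,\ldots,\lceil \log \chi(G) \rceil\}$, I would define a bipartition $(V_j^0, V_j^1)$ of the vertex set by collecting all vertices whose color label has a $0$ (respectively $1$) in bit $j$. Let $B_j$ be the subgraph of $G$ consisting of all edges with one endpoint in $V_j^0$ and the other in $V_j^1$; this $B_j$ is bipartite by construction. Since any edge $\{u,v\}$ of $G$ joins vertices of distinct colors, their binary labels differ in at least one bit $j$, so $\{u,v\}$ lies in $B_j$. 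Hence the $\lceil \log \chi(G) \rceil$ bipartite graphs $B_1,\ldots,B_{\lceil \log \chi(G)\rceil}$ cover $E(G)$.

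For necessity, I would argue the converse direction. Suppose that $E(G)$ is covered by $t$ bipartite graphs $B_1,\ldots,B_t$ with bipartitions $(X_j, Y_j)$, $1\leq j\leq t$. For each vertex $v\in V(G)$, form a binary string $s(v)\in\{0,1\}^t$ whose $j$-th bit is $0$ if $v\in X_j$ and $1$ if $v\in Y_j$ (breaking ties arbitrarily for vertices not touched by $B_j$). If $\{u,v\}\in E(G)$, then this edge belongs to some $B_j$, which forces $u$ and $v$ to lie on opposite sides of the $j$-th bipartition, so $s(u)\neq s(v)$. Thus $s$ is a proper coloring of $G$ using at most $2^t$ colors, giving $\chi(G)\leq 2^t$ and therefore $t\geq \lceil \log \chi(G)\rceil$.

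The two directions combine to give the stated equality. There is no real obstacle here; the only mildly delicate point is handling vertices that are isolated in some $B_j$ (or, symmetrically, color classes that receive no binary label when $\chi(G)$ is not a power of $2$), but both are dealt with by arbitrary tie-breaking, which does not affect the covering or proper-coloring property.
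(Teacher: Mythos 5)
Your proof is correct and takes essentially the same approach as the paper: both directions use the binary-encoding correspondence between proper colorings and bipartite edge covers, with necessity obtained by assigning each vertex a length-$t$ bit string from its side in each bipartition, and sufficiency by converting a $\chi(G)$-coloring into $\lceil \log \chi(G)\rceil$ bit-determined bipartitions. The paper's appendix proof is the same argument in slightly different notation, so there is nothing substantive to add.
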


Note that $f(n)$ is equal to $\beta_0(n,2)$, thus $\beta_0(n,2)=\lceil \log_2 n\rceil$.
In fact, when the family $\mathcal{F}$ is the edge set of a graph $G(V,E)$, where $V=[n]$,
any bisecting family $\mathcal{F}'$ for  $\mathcal{F}$ forms a covering  of the edges of $G$ with $|\mathcal{F}'|$ bipartite graphs. 
We state these observations as a corollary below. 
\begin{corollary}\label{prop:graph}
	For a graph $G(V,E)$,
	$\beta_{0}(E)=\lceil \log_2 \chi(G) \rceil$.
	Thus, $\beta_0(n,2)=\lceil \log_2 n\rceil$.
\end{corollary}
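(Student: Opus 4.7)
The plan is to observe that for a graph $G(V,E)$, a bisecting family for $E$ is essentially the same object as a covering of $E$ by bipartite subgraphs, and then invoke Proposition~\ref{lemma:bc}.

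First I would unpack the definition in the edge case. A set $A \in E$ has $|A|=2$, so the condition $|A \cap A'| - |A \cap ([n]\setminus A')| \in \{-1,0,1\}$ forces $|A \cap A'|=1$; equivalently $A'$ separates the two endpoints of the edge $A$. Thus a bisecting family $\mathcal{F}' = \{A_1',\ldots,A_t'\}$ for $E$ is exactly a collection of vertex subsets such that every edge of $G$ is cut by at least one $A_j'$.

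Next I would translate this to bipartite covers. For each $A_j'$, the cut $(A_j', [n]\setminus A_j')$ is a bipartition of $V$, and the set $E_j \subseteq E$ of edges separated by $A_j'$ induces a bipartite subgraph $G_j = (V, E_j)$ of $G$. The bisecting condition says $E = \bigcup_{j=1}^t E_j$, so the $G_j$ cover the edges of $G$. Conversely, any cover of $E$ by $t$ bipartite subgraphs $G_1,\ldots,G_t$ yields, via the $2$-coloring of each $G_j$, subsets $A_1',\ldots,A_t'$ forming a bisecting family for $E$. Hence $\beta_0(E)$ equals the minimum number of bipartite subgraphs needed to cover $E(G)$.

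Applying Proposition~\ref{lemma:bc} then gives $\beta_0(E) = \lceil \log \chi(G) \rceil$ immediately. For the second assertion, specialize to $G = K_n$ so that $E = \binom{[n]}{2}$ and $\chi(K_n) = n$, which yields $\beta_0(n,2) = \lceil \log n \rceil$. The only subtle step is the first translation, where I must confirm that ``$A'$ bisects an edge'' coincides with ``$A'$ separates its endpoints'', but this is forced by the parity of $|A|=2$ and the choice $D=\{-1,0,1\}$; beyond that, the argument is a direct reduction to the cited folklore result.
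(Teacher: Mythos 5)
Your proposal is correct and matches the paper's own reasoning: the paper likewise identifies bisecting families for an edge set with covers by bipartite subgraphs (noting that bisecting a $2$-element set is exactly separating its endpoints) and then invokes Proposition~\ref{lemma:bc} to conclude $\beta_0(E)=\lceil\log\chi(G)\rceil$, specializing to $K_n$ for the second assertion. The only cosmetic difference is that you phrase the bisecting condition with $D=\{-1,0,1\}$ rather than $D=\{0\}$, but since $|A|=2$ forces the difference $|A\cap A'|-|A\cap([n]\setminus A')|$ to be even, the two coincide, as your argument implicitly uses.
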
 
See \cite{renyi1961,kat1966,weg1979} for details on separating families.

Galvin proposed the following special case restricted only subsets of size exactly half the size of the ground set:
What is the minimum $m$  such that there exists a set of 
subsets $B_1,\ldots,B_m$ of $\{1,\ldots,4n\}$, each of size $2n$, with the property that for all $A \subset \{1,\ldots,4n\}$, there exists an $B_i$ with $|A \cap B_i|=n$.
He showed that $m \leq 2n$ and conjectured that $m=2n$.
Frankl-Rodl \cite{frankl1987} proved that $m > \epsilon n$, for some fixed $\epsilon$, $0< \epsilon < 1$. Enamoto et. al.
\cite{enomoto1987} demonstrated that $m=2n$ when $n$ is odd.

\subsection{Notations and definitions}

Let $[n]$ denote the set of integers $\{1,\ldots,n\}$,
$\pm i$ denote the set of integers $\{-i,i\}$, and
$[\pm i]$ denote the set of integers $\{-i,-i+1,\ldots,i\}$.
Let $\mathcal{F}$ denote a family of subsets of $[n]$ and $\mathcal{F}'$ denote another family of subsets
with some desired intersection property with elements of $\mathcal{F}$.
Let $\binom{[n]}{k}$ denote the family of all the $k$-sized subsets of $[n]$.
We use $\beta_{[\pm i]}(\mathcal{F})$ (resp., $\beta_i(\mathcal{F})$) to denote $\beta_D(\mathcal{F})$ if $D=[\pm i]$ (resp., $D=\{i\}$).
We denote an $n$-dimensional vector $R \in \{0,1\}^n$ (or $\{-1,+1\}^n$) as $R=(x_1,\ldots,x_n)$ where $x_j \in \{0,1\}$ (resp., $\{-1,+1\}$).
The \emph{weight} of a vector  $R=(x_1,\ldots,x_n)\in \{0,1\}^n$ (or $\{-1,+1\}^n$) is 
the number of $x_j$'s which are 1 (resp., -1), $1 \leq j \leq n$.
Vector $R \in \{0,1\}^n$ is even (resp., odd) if the number of $1$'s in $R$ is even (resp., odd).
A vector $R \in \{-1,1\}^n$ is even (resp., odd) if the number of $-1$'s in $R$ is even (resp., odd).
We use $\log$ to denote $\log_2$ in the rest of the paper.

\subsection{Our Contribution}

We begin by addressing  the problem of bounding and computing $\beta_{D}(n)$, where $D=[\pm i]$.
We demonstrate a  construction yielding an upper bound of $\lceil \frac{n}{2i} \rceil$ for $\beta_{[\pm i]}(n)$. 
Further,
we show using a polynomial representation for the parity function
that $\lceil \frac{n}{2i} \rceil$ is also a lower bound for $\beta_{[\pm i]}(n)$.

\begin{theorem}\label{thm:Knn}
	$\beta_{[\pm i]}(n) = \lceil \frac{n}{2i} \rceil$, $n \in \mathbb{N}$, $i \in [n]$.
\end{theorem}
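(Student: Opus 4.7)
The plan is to establish matching upper and lower bounds. For the upper bound $\beta_{[\pm i]}(n)\leq \lceil n/(2i)\rceil$, I will exhibit $t:=\lceil n/(2i)\rceil$ bicolorings $v^{(1)},\ldots,v^{(t)}\in\{\pm 1\}^n$ (equivalently, subsets $A'_1,\ldots,A'_t\subseteq[n]$) such that for every $A\subseteq[n]$ some $g_j(x_A):=\sum_{k\in A}v^{(j)}_k$ lies in $[\pm i]$. The natural plan is to partition $[n]$ using a block structure tied to the parameter $2i$ and design the $v^{(j)}$'s so that every $A$ is covered by at least one bicoloring; the verification reduces to a combinatorial sweep showing that the intersection sizes $(|A\cap A'_j|)_j$ cannot all lie outside the width-$i$ target window $[(|A|-i)/2,(|A|+i)/2]$, which is equivalent to $|g_j(x_A)|\leq i$.

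For the lower bound, suppose $\{A'_1,\ldots,A'_t\}$ is any $[\pm i]$-secting family for $2^{[n]}$, with associated linear forms $g_j(x)=\sum_k v^{(j)}_k x_k$. The key parity identity $g_j(x_A)\equiv |A|\pmod 2$ implies that on one half of the Boolean cube only $i$ of the $2i+1$ integers in $[\pm i]$ are attainable (the even-weight $A$ when $i$ is odd; the odd-weight $A$ when $i$ is even). On that parity class I will define
\[
\tilde P_j(x):=\prod_{\substack{k\in[\pm i]\\ k\not\equiv i\,(\mathrm{mod}\,2)}}(g_j(x)-k),
\]
a polynomial of degree $i$ in $x$, and set $Q(x):=\prod_{j=1}^t \tilde P_j(x)$, of degree at most $it$. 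On the chosen parity class the covering assumption forces some factor $\tilde P_j(x_A)$ to vanish, so $Q(x_A)=0$; on the opposite parity class every factor is a nonzero odd integer, so $Q(x_A)\neq 0$. Thus $Q$ is a nonzero function on $\{0,1\}^n$ whose multilinear representation vanishes on a full parity class of size $2^{n-1}$.

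To finish, I will prove a Fourier-theoretic lemma: any nonzero multilinear polynomial $f:\{0,1\}^n\to\mathbb{R}$ vanishing on a single parity class has multilinear degree at least $\lceil n/2\rceil$. Expanding $f=\sum_S \hat f(S)\chi_S$ in the $\pm 1$ character basis $\chi_S(x)=\prod_{k\in S}(1-2x_k)$ and using $\mathbf{1}_{E_n}=(1+\chi_{[n]})/2$, vanishing on even weights is equivalent to $\hat f(\overline S)=-\hat f(S)$ for all $S\subseteq[n]$ (with $\hat f(\overline S)=\hat f(S)$ in the odd case). Either way, a nonzero coefficient at some $S_0$ forces one at $\overline{S_0}$, so $\max(|S_0|,n-|S_0|)\geq\lceil n/2\rceil$. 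Applied to $Q$ this yields $it\geq\lceil n/2\rceil$, and the identity $\lceil\lceil n/2\rceil/i\rceil=\lceil n/(2i)\rceil$ then gives $t\geq\lceil n/(2i)\rceil$, completing the lower bound.

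The hardest part will be the upper bound construction itself: since $\lceil n/(2i)\rceil$ is strictly larger than the $O(\log(n/i))$ count one gets from Walsh/Hadamard-style bicolorings, a genuinely combinatorial scheme is required, and guaranteeing coverage for every $A$ (including the extremal ones) is nontrivial. On the lower-bound side the main subtlety is restricting the product defining $\tilde P_j$ to the $i$ parity-matching roots: using the full product over all $2i+1$ values $k\in[\pm i]$ would inflate the degree to $(2i+1)t$ and only give the weaker bound $t\geq\lceil n/(2i+1)\rceil$, which fails to match the upper bound.
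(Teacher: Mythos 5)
Your lower-bound argument is correct and takes a genuinely different route from the paper. The paper squares the factors so that the resulting polynomial $M$ weakly represents the parity function and then invokes the Minsky--Papert lemma (weak degree of parity is $n$); you instead build a degree-$it$ polynomial $Q$ that merely vanishes on one parity class and prove a self-contained Fourier lemma: any nonzero multilinear function on $\{0,1\}^n$ vanishing on a full parity class has degree at least $\lceil n/2\rceil$, via the coefficient identity $\hat f(\overline{S})=\pm\hat f(S)$. The factor of two in the degree (the paper has $2|\mathcal{R}|i$, you have $it$) is exactly compensated by the factor of two in the lemma ($n$ versus $\lceil n/2\rceil$), and both yield $t\geq\lceil n/(2i)\rceil$. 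Your version is arguably cleaner: it drops the squaring that the paper needs to enforce sign-consistency, and it makes transparent why restricting $\tilde P_j$ to the $i$ parity-matched roots in $[\pm i]$ is the right move.

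The genuine gap is in the upper bound, which your proposal only plans and never executes. You must actually exhibit $\lceil n/(2i)\rceil$ bicolorings and verify coverage, and you flag this yourself as the hardest part. The missing ingredient is precisely a sweep construction of the paper's type: set $B_1=\{1,\ldots,\lceil n/2\rceil\}$, obtain $B_{j+1}$ from $B_j$ by swapping $i$ elements across the current bipartition so that $|B_j\triangle B_{j+1}|=2i$, hence $c_j:=|A\cap B_j|-|A\cap([n]\setminus B_j)|$ changes by at most $2i$ per step; comparing $c_1$ with the value at the complement of $B_1$ (which equals $-c_1$) then forces some $c_\ell$ into $[-i,i]$ by a sign-change argument. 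Until a construction of this kind is supplied and checked, the theorem's equality is not established. Also, your remark that $\lceil n/(2i)\rceil$ is strictly larger than an $O(\log(n/i))$ count from Walsh/Hadamard bicolorings is backwards: an $O(\log(n/i))$-size $[\pm i]$-secting family for the full power set would contradict the very lower bound you just proved; logarithmic sizes arise only for restricted families such as $\binom{[n]}{2}$.
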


We study $\beta_{[\pm i]}(\mathcal{F})$ for a family $\mathcal{F}$ on $[n]$,
in terms of $i$ and $|\mathcal{F}|$, using Chernoff's bound.
\begin{theorem}\label{thm:prob}
	Let $\mathcal{F}$ be a family of subsets of $[n]$ and let $m=|\mathcal{F}|$.
	Let $D=[\pm i]$, where $i \geq \sqrt{\frac{3n\ln (2m)}{t}}$ and $t \leq \frac{1}{2}\log m$.
	Then, $\beta_D(\mathcal{F}) \leq t$. 
\end{theorem}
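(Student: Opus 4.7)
The plan is to use the probabilistic method: sample $\mathcal{F}'$ by choosing $t$ subsets of $[n]$ independently and uniformly at random (equivalently, each element of $[n]$ is put into each $A' \in \mathcal{F}'$ independently with probability $1/2$), and show that with probability at least $1/2$ the resulting family is $D$-secting for $\mathcal{F}$. Since the success probability is bounded below by a constant $\geq 1/2$, the expected number of trials before producing a valid $\mathcal{F}'$ is at most $2$.

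First, fix $A \in \mathcal{F}$ with $|A| = a \leq n$ and fix one random $A'$. The quantity $|A \cap A'| - |A \cap ([n] \setminus A')|$ equals $\sum_{j \in A} Y_j$, where the $Y_j$ are i.i.d.\ uniform on $\{-1, +1\}$. I would then apply the standard Hoeffding/Chernoff bound to obtain
\[
\Pr\bigl[\,|{\textstyle \sum_{j \in A}} Y_j| > i\,\bigr] \;\le\; 2\exp\!\left(-\frac{i^2}{2a}\right) \;\le\; 2\exp\!\left(-\frac{i^2}{2n}\right),
\]
i.e., the probability that a single random $A'$ fails to $D$-sect $A$ (where $D = [\pm i]$) is at most $2\exp(-i^2/(2n))$.

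Next, since the $t$ chosen subsets are independent, the probability that \emph{every} $A' \in \mathcal{F}'$ fails to $D$-sect this particular $A$ is at most $\bigl(2\exp(-i^2/(2n))\bigr)^{t}$. A union bound over the $m$ sets in $\mathcal{F}$ then yields
\[
\Pr[\mathcal{F}' \text{ is not } D\text{-secting for } \mathcal{F}] \;\le\; m \cdot 2^{t} \exp\!\left(-\frac{t i^2}{2n}\right).
\]
Substituting $i = \sqrt{3n\ln(2m)/t}$, the exponent becomes $ti^2/(2n) = \tfrac{3}{2}\ln(2m)$, so the bound simplifies to $m \cdot 2^{t} \cdot (2m)^{-3/2} = 2^{t-3/2}/\sqrt{m}$. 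Under the hypothesis $t \le \tfrac{1}{2}\log m$, this is at most $1/2$, which is exactly what is needed.

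The proof is largely a bookkeeping exercise once the Chernoff estimate is in place; the only care required is to check that the chosen parameters $i$ and $t$ make the exponent cancel cleanly against the union-bound factor $m \cdot 2^t$, and then to invoke the elementary fact that a Bernoulli trial with success probability $\ge 1/2$ requires expected at most $2$ trials. I do not anticipate any genuine obstacle here; the main subtlety is merely verifying that the bound on $a$ by $n$ in the Hoeffding step is not too lossy for the stated parameter regime.
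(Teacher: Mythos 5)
Your proposal is correct and follows essentially the same route as the paper: sample $t$ uniformly random subsets, apply the Chernoff/Hoeffding tail bound to $\sum_{j\in A} Y_j$, take a union bound over the $m$ sets, and observe that the failure probability is below $1/2$ when $2^t \le \sqrt{m}$. The only cosmetic difference is that the paper states the Chernoff threshold as $\sqrt{3|A|\ln(2m)/t}$ so the $|A|$ factors cancel exactly and $|A|\le n$ is invoked only at the end, whereas you replace $|A|$ by $n$ up front; the resulting bound $2^{t-3/2}/\sqrt{m}\le 1/(2\sqrt 2)$ is identical.
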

In particular, if $i \geq \sqrt{4.2n+1}$ and $|\mathcal{F}| = O(n^c)$, for $c \in \mathbb{N}$, a $D$-secting family $\mathcal{F}'$ of cardinality $O(\log n)$ can be computed for families $\mathcal{F}$, thus improving the bound from Theorem \ref{thm:Knn}
for this range of $i$ and $|\mathcal{F}|$.

Subsequently, we study $\beta_{D}(n)$, where $D$ is a singleton set, i.e., $D=\{i\}$.
Note that $\beta_{i}(n)=\beta_{-i}(n)$.
Moreover, when $D=\{-i,i\}$,  
note that $\beta_{\pm i}(n)\leq \beta_{i}(n) \leq 2\beta_{\pm i}(n)$.
Therefore, we focus on establishing bounds for $\beta_{i}(n)$.
We demonstrate a construction to show that  $\beta_1(n)$ is at most $\lceil\frac{n}{2}\rceil$.
We also show that $\beta_1(n)$ is at least $\lceil \frac{n}{2} \rceil$ using arguments similar to those in the proof of Theorem 
\ref{thm:Knn} about $\beta_{[\pm 1]}(n)$.
In Section \ref{subsec:idist}, we establish a lower bound of $\frac{n-i+1}{2}$ for arbitrary $i \in [n]$, $i \geq 2$.
We demonstrate a construction establishing $\beta_i(n) \leq n-i+1$.
We have the following theorem.
\begin{theorem}\label{thm:iknn}
	$\frac{n-i+1}{2} \leq \beta_i(n) \leq n-i+1$, $n \in \mathbb{N}$, $i \in [n]$.
\end{theorem}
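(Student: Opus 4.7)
The plan is to prove the two inequalities separately, using an explicit family for the upper bound and a ``hard'' family for the lower bound.

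For the upper bound $\beta_i(n) \leq n-i+1$, I would exhibit the nested family $\mathcal{F}' = \{[i], [i+1], \ldots, [n]\}$ of cardinality $n-i+1$. Given any $A \subseteq [n]$ with $|A|=k$, $k \equiv i \pmod{2}$, and $k \geq i$ (sets with $k < i$ cannot be $i$-sected at all, since they would require $|A \cap A'| = (k+i)/2 > k$), I define $g(j) := |A \cap [j]|$ on $j \in \{i, i+1, \ldots, n\}$. Since $g$ is non-decreasing with increments in $\{0,1\}$, satisfies $g(i) \leq i \leq (k+i)/2$, and $g(n) = k \geq (k+i)/2$, a standard monotonicity argument yields some $j^{\star}$ with $g(j^{\star}) = (k+i)/2$; then $|A \cap [j^{\star}]| - |A \cap ([n] \setminus [j^{\star}])| = 2g(j^{\star}) - k = i$, so $[j^{\star}]$ $i$-sects $A$.

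For the lower bound $\beta_i(n) \geq \frac{n-i+1}{2}$, the plan is to exhibit a family $\mathcal{F}$ on $[n]$ whose $i$-secting requires at least $\frac{n-i+1}{2}$ sets. A natural candidate is the star-like family $\mathcal{F}_{\star} = \{[i-1] \cup \{u\} : u \in \{i, i+1, \ldots, n\}\}$, consisting of $n-i+1$ size-$i$ subsets matching the parity condition. For $A'$ to $i$-sect $[i-1] \cup \{u\}$ one needs $[i-1] \cup \{u\} \subseteq A'$, so every useful $A'$ must contain $[i-1]$ and can only serve those slots $u$ with $u \in A'$. Combining $\mathcal{F}_{\star}$ with one or two ``anchor'' sets (for example, $[n]$ when $n \equiv i \pmod{2}$, which pins $|A'| = (n+i)/2$ and hence $|A' \setminus [i-1]| = (n-i+2)/2$) would control the number of slots each admissible $A'$ covers, and a double-counting argument on $\bigcup_{A' \in \mathcal{F}'}(A' \setminus [i-1]) \supseteq \{i, \ldots, n\}$ should then yield the claimed lower bound.

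The main obstacle will be selecting anchors so that each admissible $A'$ covers at most roughly $2$ of the star-sets, pushing the per-$A'$ capacity from the straightforward $(n-i+2)/2$ down to the right order; this may require combining several anchor sets of different sizes or appealing to a parity-based separation argument. As an alternative, one could adapt the polynomial method underlying Theorem \ref{thm:Knn}: the failure condition for $\mathcal{F}'$ becomes $\prod_{A' \in \mathcal{F}'}(f_{A'}(A) - i) \neq 0$, where $f_{A'}(A) := |A \cap A'| - |A \cap ([n] \setminus A')|$ is a linear form in the indicator of $A$, and one would seek a family $\mathcal{F}$ on which this product must vanish, forcing its degree $|\mathcal{F}'|$ to be at least $\frac{n-i+1}{2}$ via an algebraic degree lower bound on an effective universe of size $n-i+1$ (obtained by ``fixing'' the $(i-1)$-element core $[i-1]$ of the star family).
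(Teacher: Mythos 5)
Your upper bound argument coincides with the paper's: the nested chain $[i]\subset[i+1]\subset\cdots\subset[n]$ of cardinality $n-i+1$, combined with a discrete intermediate-value argument on $g(j)=|A\cap[j]|$, is exactly the construction used, and it is correct (your observation that $|A|\geq i$ and $|A|\equiv i\pmod 2$ are required is also correct and matches the paper's setup).

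The lower bound is where the substance is, and both of your proposed routes have genuine gaps. Your primary plan — the star family $\mathcal{F}_\star=\{[i-1]\cup\{u\}\}$ plus anchor sets plus double counting on $\bigcup_{A'}(A'\setminus[i-1])$ — does not close. Your observation that a useful $A'$ must contain $[i-1]$ is right (indeed $i$-secting a size-$i$ set forces $A\subseteq A'$), but an anchor such as $[n]$ pins $|A'|=(n+i)/2$ only for the \emph{one} member of $\mathcal{F}'$ that $i$-sects it; the sizes of the other members are unconstrained. Even in the most favorable reading, where every $A'$ satisfied $|A'\setminus[i-1]|=(n-i+2)/2$, the double count gives only $|\mathcal{F}'|\geq 2(n-i+1)/(n-i+2)\approx 2$, far from $(n-i+1)/2$. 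You acknowledge you would need to cut the per-$A'$ capacity to roughly $2$, but a single $A'$ containing $[i-1]$ together with about half of $\{i,\ldots,n\}$ genuinely $i$-sects about half of the star sets simultaneously, and no finite collection of anchors changes that; I do not see how to rescue this route.

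Your alternative sketch points at the right tool — the weak-degree-of-parity argument used for Theorem \ref{thm:Knn} — which is indeed what the paper does, but two crucial ingredients are missing. First, the raw product $\prod_{A'}(f_{A'}(A)-i)$ does not have a consistent sign on the points where it is nonzero, so it cannot be fed directly into a weak-representation lower bound; the paper squares each factor, $M(X)=\prod_{R\in\mathcal{R}}(\langle X,R\rangle-i)^2$, at the price of doubling the degree to $2|\mathcal{R}|$. Second, your ``effective universe of size $n-i+1$ by fixing $[i-1]$'' heuristic does not translate: if you restrict $\mathcal{F}$ to $\{[i-1]\cup B:B\subseteq\{i,\ldots,n\}\}$ and project each $A'$ onto $\{i,\ldots,n\}$, the $i$-secting condition becomes a $\{1,3,\ldots,2i-1\}$-secting condition on the smaller ground set, since the residual target shifts by $2(i-1-|A'\cap[i-1]|)$, a quantity not under your control; this gives a bound weaker by a factor of order $i$. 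What the paper does instead of restricting is to multiply $M(X)$ by the degree-$(i-1)$ elementary symmetric polynomial $F(X)=\sum_{|S|=i-1}\prod_{j\in S}x_j$, which vanishes precisely on sets of size at most $i-2$. The product $P=MF$ then vanishes on all even sets (via $M$ when $|A|\geq i$, via $F$ when $|A|\leq i-2$) and is strictly positive on odd sets of size $\geq i-1$, so after the domain switch $-P$ weakly represents parity on the full $n$ variables; Lemma \ref{lemma:weak} forces $(i-1)+2|\mathcal{R}|\geq n$, i.e.\ $|\mathcal{R}|\geq(n-i+1)/2$. The key devices you need but do not supply are the squaring to fix the sign and the multiplication by a low-degree ``support indicator'' rather than a restriction to a sub-universe.
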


In Section \ref{sec:k-uniform}, we consider families $\mathcal{F}$, $\mathcal{F} \subseteq \binom{[n]}{k}$.
We study $\beta_{[\pm 1]}(n,k)$ in detail when $k$ is even;
the analysis for $\beta_{i}(n,k)$ for $i \in [n]$ and for the case when $k$ is odd is analogous.
We have lower bounds for $\beta_{[\pm 1]}(n,k)$ given by Theorem \ref{thm:klower},
Observation \ref{obs:ineq:2} (see Section \ref{sec:prelim}), and Theorem \ref{thm:n_k_linear} which are useful when
$k$ is a constant, $k$ is sublinear in $n$, and $k$ is linear in $n$, respectively.
We establish the following theorem using entropy based arguments. 
\begin{theorem}\label{thm:klower}
	\begin{align*}
	\beta_{[\pm 1]}(n,k) & \geq \begin{cases}
	\log (n-k+2) \text{, when $k$ is even and $\frac{k}{2}$  is odd,}\\
	\lceil (\log \lceil \frac{n}{\lceil\frac{k}{2}\rceil} \rceil)\rceil \text{, for any $k \geq 2$}.
	\end{cases}
	\end{align*}
\end{theorem}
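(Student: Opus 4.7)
The plan is to fix an optimal $[\pm 1]$-secting family $\mathcal{F}' = \{A_1', \ldots, A_t'\}$ for $\binom{[n]}{k}$ and analyze the equivalence classes it induces on $[n]$. Associate to each $x \in [n]$ the label $v(x) = (\mathbb{1}[x \in A_j'])_{j=1}^{t} \in \{0,1\}^t$, and let $C_1,\ldots,C_r$ be the classes of elements sharing a common label. Since there are at most $2^t$ possible labels, $t \ge \lceil \log r \rceil$, so it suffices to lower bound $r$ in each case. The key structural observation is that every $A_j'$ is a union of classes, so for any $k$-set $A$ with profile $s_i := |A \cap C_i|$, the intersection $|A \cap A_j'| = \sum_{i \,:\, C_i \subseteq A_j'} s_i$ is a \emph{sub-sum} of the profile.

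For the second bound I would show that $|C_i| \le \lceil k/2 \rceil$ for every class. Suppose, instead, that some class $C$ has more than $\lceil k/2 \rceil$ elements, and select a $k$-set $A$ with $|A \cap C| > \lceil k/2 \rceil$, filling the remaining slots from outside $C$ (the assumption $n \ge k$ ensures this is possible, taking $A \subseteq C$ when $|C|$ is so large that no outside elements are needed). Since every $A_j'$ either contains $C$ entirely or is disjoint from it, $|A \cap A_j'|$ is either at least $\lceil k/2 \rceil + 1$ or at most $\lfloor k/2 \rfloor - 1$; in neither case does it lie in $\{\lfloor k/2 \rfloor, \lceil k/2 \rceil\}$, contradicting the bisecting property. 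Packing $[n]$ into classes of size at most $\lceil k/2 \rceil$ then gives $r \ge \lceil n / \lceil k/2 \rceil \rceil$, and hence $t \ge \lceil \log \lceil n / \lceil k/2 \rceil \rceil \rceil$.

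For the first bound, where $k$ is even and $k/2$ is odd, I would exploit parity. Suppose for contradiction that $\sum_{i=1}^{r} \lfloor |C_i|/2 \rfloor \ge k/2$. Then I can choose non-negative integers $t_i \le \lfloor |C_i|/2 \rfloor$ with $\sum t_i = k/2$, and form a $k$-set $A$ with $|A \cap C_i| = 2 t_i$ for each $i$. Every sub-sum $\sum_{i \in S} 2 t_i$ is even, but bisection of the even-sized set $A$ demands some $j$ with $|A \cap A_j'| = k/2$, an odd value; no such $A_j'$ exists, contradicting the bisecting property. Hence $\sum_i \lfloor |C_i|/2 \rfloor \le k/2 - 1$. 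A short calculation, splitting classes into odd- and even-sized ones, gives $\sum_i \lfloor |C_i|/2 \rfloor = (n - O)/2$, where $O$ is the number of odd-sized classes, so the inequality rearranges to $O \ge n - k + 2$. Therefore $r \ge O \ge n - k + 2$ and $t \ge \log(n - k + 2)$.

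The main conceptual hurdle is the parity step of the first bound: the generic class-size estimate only recovers the weaker $r \ge \lceil n/(k/2) \rceil$, and it is precisely the odd parity of $k/2$, together with the fact that bisection for even $k$ must hit the unique value $k/2$, that forces nearly every class to have odd size and thereby sharpens the bound to $n - k + 2$.
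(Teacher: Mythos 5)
Your proposal is correct, and for the first bound it takes a genuinely different (and more elementary) route than the paper.

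For the second bound $\beta_{[\pm 1]}(n,k) \geq \lceil\log\lceil n/\lceil k/2\rceil\rceil\rceil$, your argument is essentially the paper's in different clothing: the paper colours each $x\in[n]$ by its $t$-bit membership vector in $\mathcal{F}'$ and shows no $(\lceil k/2\rceil+1)$-set is monochromatic, then invokes the weak chromatic number $\lceil n/\lceil k/2\rceil\rceil$ of the complete $(\lceil k/2\rceil+1)$-uniform hypergraph; your ``classes of equal label'' are exactly the colour classes, and your bound $|C_i|\le\lceil k/2\rceil$ is the same fact expressed contrapositively. Both give $r\ge\lceil n/\lceil k/2\rceil\rceil$ and $t\ge\lceil\log r\rceil$.

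For the first bound $\beta_{[\pm 1]}(n,k)\ge\log(n-k+2)$ when $k$ is even and $k/2$ is odd, the two proofs diverge. The paper associates to each $A_j'$ a bipartite subgraph of the Kneser graph $KG(n,k/2)$ (the orientation of each $\tfrac{k}{2}$-set is determined by whether its majority lies in $A_j'$, and the odd parity of $k/2$ rules out ties), observes that these $t$ bipartite graphs cover $KG(n,k/2)$, and then appeals to the Lov\'asz--Kneser theorem $\chi(KG(n,k/2))=n-k+2$ together with the log-chromatic-number bound for bipartite covers. Your argument stays entirely inside the class structure: since every $A_j'$ is a union of classes, $|A\cap A_j'|$ is always a sub-sum of the profile $(|A\cap C_i|)_i$; if $\sum_i\lfloor|C_i|/2\rfloor\ge k/2$ you pack a $k$-set $A$ whose class intersections are all even, so every $|A\cap A_j'|$ is even while $k/2$ is odd, contradicting bisection; hence $\sum_i\lfloor|C_i|/2\rfloor\le k/2-1$, which unwinds (via $\sum_i\lfloor|C_i|/2\rfloor=(n-O)/2$ with $O$ the number of odd classes) to $O\ge n-k+2$ and so $r\ge n-k+2$. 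This is self-contained and elementary: it does not invoke the Kneser-graph chromatic number, which is a topological theorem, and it makes transparent exactly where the parity hypothesis on $k/2$ enters (it forces the target value $k/2$ to be unreachable by even sub-sums). The paper's route has the advantage of explaining the bound $n-k+2$ as a chromatic number, which connects nicely to Corollary~\ref{prop:graph} and the general bipartite-cover framework, but your proof trades that structural insight for a shorter and dependency-free argument that delivers the same (in fact, with the ceiling, slightly cleaner) numeric bound.
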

When $cn < k < (1-c)n$ for a constant $c$, $0 < c < \frac{1}{2}$, we establish an improved lower bound for $\beta_{[\pm 1]}(n,k)$ 
using a vector space orthogonality argument, enabling us to apply a recent result of Keevash and Long \cite{keevash2017frankl}.
\begin{theorem}\label{thm:n_k_linear}
	Let $c$ be a constant such that $0 < c < \frac{1}{2}$ and $n \in \mathbb{N}$. If $cn < k < (1-c)n$, then
	\begin{align*}
	\max\Big\{\beta_{[\pm 1]}(n,k),\beta_{[\pm 1]}(n,k-1),\beta_{[\pm 1]}(n,k-2),\beta_{[\pm 1]}(n,k-3)\Big\}	\geq \delta n, 	
	\end{align*}
	where $\delta=\delta(c)$ is some real positive constant.
\end{theorem}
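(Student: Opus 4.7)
The plan is to reduce the theorem to a Frankl--R\"odl type bound for linear subspaces of $\mathbb{F}_2^n$ via a parity-based vector space orthogonality argument, and then apply the quantitative result of Keevash and Long.

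First I would use a residue-class pigeonhole. Among the four consecutive integers $k,k-1,k-2,k-3$, exactly one is congruent to $2\pmod 4$; call this value $k^*$. Then $k^*$ is even, $k^*/2$ is odd, and (for $n$ sufficiently large) $k^*$ lies in an interval $[c'n,(1-c')n]$ for some constant $c'=c'(c)>0$. Consequently it suffices to prove $\beta_{[\pm 1]}(n,k^*)\ge \delta n$.

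Suppose for contradiction that $\mathcal{F}'=\{A_1',\ldots,A_t'\}$ is a bisecting family for $\binom{[n]}{k^*}$ with $t<\delta n$. Because $k^*$ is even, the set $A_j'$ bisects $A\in\binom{[n]}{k^*}$ precisely when $|A\cap A_j'|=k^*/2$, and by our choice of $k^*$ this value is \emph{odd}. Read over $\mathbb{F}_2$, this becomes the linear statement $\chi_A\cdot \chi_{A_j'}\equiv 1\pmod 2$. Let $C=\operatorname{span}_{\mathbb{F}_2}\{\chi_{A_1'},\ldots,\chi_{A_t'}\}$ and $D=C^\perp\le \mathbb{F}_2^n$, so that $\dim D\ge n-t$. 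The bisecting hypothesis forces $\chi_A\notin D$ for every $A\in\binom{[n]}{k^*}$ (otherwise all inner products would vanish), hence $D$ contains no vector of weight $k^*$. Since $D$ is closed under addition, equivalently no two codewords of $D$ lie at Hamming distance exactly $k^*$.

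Now I would invoke the quantitative Frankl--R\"odl type theorem of Keevash and Long: whenever $d\in[c'n,(1-c')n]$, any subset of $\{0,1\}^n$ with no two elements at Hamming distance $d$ has size at most $(2-\eta)^n$ for some $\eta=\eta(c')>0$. Applying this to $D$ with $d=k^*$ yields $2^{\,n-t}\le |D|\le (2-\eta)^n$, and rearranging gives $t\ge \delta n$ for a suitable $\delta=\delta(c)>0$, contradicting the assumption $t<\delta n$.

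The main obstacle is this final step: it relies on the full quantitative strength of the Keevash--Long improvement of Frankl--R\"odl. The $\mathbb{F}_2$ orthogonality argument is clean but only produces a genuine forbidden-weight structure when $k^*\equiv 2\pmod 4$; for the other three residue classes the parity of $|A\cap A_j'|$ is either the wrong one or unconstrained, and the argument collapses. This is precisely why the statement must take the maximum over four consecutive values of $k$ -- that is exactly the window needed to guarantee a $k^*\equiv 2\pmod 4$ inside $(cn,(1-c)n)$.
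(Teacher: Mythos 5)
Your proposal is correct and follows essentially the same route as the paper: pick the unique $l \in \{k,k-1,k-2,k-3\}$ with $l \equiv 2 \pmod 4$, observe that bisection of an $l$-set forces an odd intersection and hence a nonzero $\mathbb{F}_2$ inner product with some $X_{A'_j}$, conclude that the orthogonal complement of $\operatorname{span}_{\mathbb{F}_2}\{X_{A'_j}\}$ avoids weight $l$ and (by linearity) Hamming distance $l$, and then apply the Keevash--Long bound to force the complement to have dimension at most $(1-\delta)n$. The only cosmetic difference is that you phrase the final inequality as $2^{\,n-t}\leq |D|\leq (2-\eta)^n$ rather than via $\dim V \geq \lfloor\delta n\rfloor$, but these are the same computation.
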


Let $\mathcal{F}$ be a family of subsets of $[n]$.
The {\it dependency} of a subset $A \in \mathcal{F}$
denoted by $d(A,\mathcal{F})$ 
is the number of subsets $\widehat{A} \in \mathcal{F}$, such that
(i) $|A \cap \widehat{A}| \geq 1$, and
(ii) $A \neq \widehat{A}$.   
The {\it dependency of a family} $d(\mathcal{F})$ or simply $d$, denotes the 
maximum dependency of any subset $A$ in the family $\mathcal{F}$.
We study  $\beta_{[\pm 1]}(\mathcal{F})$ for families $\mathcal{F}$ consisting of $k$-sized sets with bounded dependency and
using a corollary of the Lov\'{a}sz local lemma from \cite{Moser:2010:CPG:1667053.1667060}, we prove the following probabilistic upper bound.
\begin{theorem}\label{thm:lll}
	For a family $\mathcal{F}$ consisting of $k$-sized subsets of $[n]$ and dependency $d$,
	$\beta_{[\pm 1]}(\mathcal{F}) \leq \frac{\sqrt{k}}{c}(\ln(d+1)+1)$, where $c = 0.67$.
\end{theorem}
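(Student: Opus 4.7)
The plan is to produce the bisecting family probabilistically and apply the Lovász Local Lemma via Lemma \ref{cor:lll}. Set $t = \left\lceil \frac{\sqrt{k}}{c}(\ln(d+1)+1) \right\rceil$ with $c = 0.67$, and sample $t$ subsets $A'_1,\ldots,A'_t \subseteq [n]$ independently, including each element of $[n]$ in $A'_j$ with probability $1/2$. For each $A \in \mathcal{F}$, let $B_A$ denote the bad event that \emph{none} of $A'_1,\ldots,A'_t$ bisects $A$, i.e., $\bigl||A\cap A'_j|-|A\cap ([n]\setminus A'_j)|\bigr|\geq 2$ for every $j$. The goal is to show $\Pr\left[\bigcap_{A\in\mathcal{F}} \overline{B_A}\right] > 0$, which would witness the existence of the desired family $\mathcal{F}'=\{A'_1,\ldots,A'_t\}$.

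First I would bound $\Pr[B_A]$ for a single $k$-set $A$. The probability that a uniformly random $A'\subseteq [n]$ bisects $A$ equals $\binom{k}{k/2}/2^k$ when $k$ is even and $2\binom{k}{(k-1)/2}/2^k$ when $k$ is odd. Using a Stirling-type estimate for central binomial coefficients, this probability is at least $c/\sqrt{k}$ in both parities for $c = 0.67$ (verified by treating small $k$ by hand and using the asymptotic $\binom{2m}{m}2^{-2m}\sim\sqrt{2/(\pi k)}$ for large $k$). Since the $t$ random sets are independent, $\Pr[B_A] \leq (1-c/\sqrt{k})^t \leq e^{-ct/\sqrt{k}}$.

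Next I would establish the dependency structure. The event $B_A$ is determined entirely by the random bits assigned to elements of $A$ across the $t$ samples, so whenever $A\cap \widehat{A}=\emptyset$ the events $B_A$ and $B_{\widehat A}$ are mutually independent. Consequently each $B_A$ is independent of all but at most $d(A,\mathcal{F})\leq d$ other events, so the dependency graph has maximum degree at most $d$. Invoking the symmetric LLL (Lemma \ref{cor:lll}), a positive-probability certificate exists provided $e\cdot \Pr[B_A]\cdot (d+1)\leq 1$. Substituting the bound on $\Pr[B_A]$, this becomes $e^{1-ct/\sqrt{k}}(d+1)\leq 1$, which rearranges exactly to $t \geq \frac{\sqrt{k}(\ln(d+1)+1)}{c}$ and thus holds by our choice of $t$. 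Hence $\beta_{[\pm 1]}(\mathcal{F})\leq t \leq \frac{\sqrt{k}}{c}(\ln(d+1)+1)$.

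The only non-mechanical step is verifying the concrete constant $c=0.67$ in the single-set bisection probability; once that inequality is in hand, the LLL computation is routine. No separate treatment is required for the parities of $k$ beyond plugging in the corresponding central binomial coefficient bound, and the independence-by-disjoint-support argument is standard for this type of random construction.
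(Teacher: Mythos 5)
Your proof is correct and takes essentially the same route as the paper: sample $t$ random subsets by including each point independently with probability $1/2$, lower-bound the single-set bisection probability by $c/\sqrt{k}$ via the central binomial coefficient, and invoke the symmetric Lov\'asz Local Lemma (Lemma \ref{cor:lll}) with the observation that $B_A$ and $B_{\widehat{A}}$ are independent when $A\cap\widehat{A}=\emptyset$. If anything, you state the dependency-structure argument and the parity cases slightly more explicitly than the paper does, but the substance is identical.
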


We also study the case when $\mathcal{F}$ consists of all the subsets of $[n]$ of cardinality more than $k$, $k \in [n]$
and we have the following bounds. 
\begin{theorem}\label{thm:constup}
	Let $\mathcal{F}=\binom{[n]}{k} \cup \binom{[n]}{k+1}\ldots \cup \binom{[n]}{n}$.
	Then, $ \frac{n-k+1}{2} \leq \beta_{[\pm 1]}(\mathcal{F}) \leq \min \{\frac{n}{2}, n-k+1 \}.$
\end{theorem}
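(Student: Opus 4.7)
The theorem has two upper bounds and one lower bound. The bound $\beta_{[\pm 1]}(\mathcal{F}) \leq \lceil n/2 \rceil$ is immediate from Theorem \ref{thm:Knn} applied with $i=1$: the $\lceil n/2 \rceil$ sets that $[\pm 1]$-sect the full power set $2^{[n]}$ trivially $[\pm 1]$-sect its subfamily $\mathcal{F}$, so this bound transfers without any extra work.

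For $\beta_{[\pm 1]}(\mathcal{F}) \leq n-k+1$, I would produce an explicit family $\mathcal{F}'=\{A'_1,\ldots,A'_{n-k+1}\}$ by downward induction on $k$. The base case $k=n$ gives $\mathcal{F}=\{[n]\}$, which is handled by a single $A'_1$ of size $\lceil n/2\rceil$. For the inductive step, given a $[\pm 1]$-secting family $\mathcal{F}'_{k+1}$ of size $n-k$ for $\binom{[n]}{\geq k+1}$, I would append one additional set $A^*$ chosen to bisect precisely those sets of size exactly $k$ not yet covered by $\mathcal{F}'_{k+1}$. The content of the step lies in showing that such a single augmenting $A^*$ always exists; this requires a structural analysis (likely a parity or shifting argument) of the uncovered size-$k$ sets to ensure they are mutually compatible with a common bisector.

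For the lower bound $\beta_{[\pm 1]}(\mathcal{F}) \geq (n-k+1)/2$, the plan is to identify a subfamily $\mathcal{G}\subseteq\mathcal{F}$ of cardinality $n-k+1$ with $\beta_{[\pm 1]}(\mathcal{G}) \geq (n-k+1)/2$ and then conclude by monotonicity. The central lemma would show that any single $A'\subseteq[n]$ can $[\pm 1]$-sect at most two members of $\mathcal{G}$; this mirrors the counting step used in the proof of $\beta_i(n)\geq (n-i+1)/2$ in Theorem \ref{thm:iknn}. The main obstacle is designing $\mathcal{G}$ correctly: naive choices such as the consecutive windows $B_j=\{j,\ldots,j+k-1\}$ or nested prefixes $\{1,\ldots,l\}$ fail, because a single $A'$ such as the set of odd-indexed elements can bisect many of them simultaneously. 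The right $\mathcal{G}$ likely mixes sets of different sizes in $\{k,\ldots,n\}$ so that a parity obstruction between consecutive sizes forces any single $A'$ to bisect at most two members. Pinning down this choice and verifying the two-per-set cap is the hardest part of the proof.
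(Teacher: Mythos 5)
Your handling of the $\lceil n/2 \rceil$ bound is exactly the paper's: invoke Theorem~\ref{thm:Knn} with $i=1$ and monotonicity. The other two pieces, however, diverge from the paper and both leave the essential work undone.

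For the upper bound $\beta_{[\pm 1]}(\mathcal{F}) \le n-k+1$, the paper (Lemma~\ref{lemma:kfull}) gives a direct, non-inductive construction: take disjoint sets $S$, $T$ of sizes $\lceil k/2 \rceil$ and $\lfloor k/2 \rfloor$, label the remaining $x=n-k$ elements $c_1,\ldots,c_x$, and form the nested chain $S_0 = S$, $S_j = S_{j-1}\cup\{c_j\}$. Any $A$ of size $k' \ge k$ that $S_0$ fails to bisect has $|A\cap S_0| < \lfloor k'/2\rfloor$, while $|A\cap S_x| = k' - |A\cap T| \ge \lceil k'/2\rceil$; since $|A\cap S_{j+1}|$ changes from $|A\cap S_j|$ by at most one, a discrete intermediate-value argument finishes. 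Your downward induction on $k$, adding one augmenting $A^*$ to handle the newly appearing size-$k$ sets, is not a viable substitute: the collection of size-$k$ sets left uncovered by $\mathcal{F}'_{k+1}$ has no reason to admit a \emph{common} bisector, and you have given no argument that it does. The burden you acknowledge as ``the content of the step'' is precisely where the proof would collapse; the paper's nested chain sidesteps it entirely.

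For the lower bound $\beta_{[\pm 1]}(\mathcal{F}) \ge (n-k+1)/2$, your plan is to find a subfamily $\mathcal{G}$ of $n-k+1$ sets such that any single $A'$ bisects at most two of them. That cap is not something the paper ever establishes, and it is not how the paper proves Theorem~\ref{thm:iknn} either---you have misremembered that proof. Both Theorem~\ref{thm:iknn} and the lower bound of Lemma~\ref{lemma:kfull} (Appendix~\ref{app:4}) are proved by the weak-degree-of-parity argument, not by counting. Concretely, the paper multiplies the discrepancy polynomial $M(X)=\prod_{R\in\mathcal{R}}\langle X,R\rangle^2$ by the elementary symmetric gadget $F(X)=\sum_{S\in\binom{[n]}{k-1}}\prod_{j\in S}x_j$, which kills all sets of size at most $k-2$; after domain conversion and multilinearization the resulting polynomial weakly represents parity, and Lemma~\ref{lemma:weak} forces $(k-1)+2|\mathcal{R}| \ge n$, i.e., $|\mathcal{R}|\ge (n-k+1)/2$. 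A counting-based route faces the obstacle you yourself flag: a single subset of $[n]$ bisects astronomically many sets of a given size, so designing a $\mathcal{G}$ with the two-per-set cap is not merely hard, it is almost certainly not the right mechanism. The polynomial argument is the missing idea.
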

Note that when $n-k$ is a constant, Theorem \ref{thm:constup} gives better upper bounds for $\beta_{[\pm 1]}(\mathcal{F})$.


\subsection{Some quick observations}
\label{sec:prelim}
In this section, we derive a few basic results on $\beta_D(\mathcal{F})$, $\beta_D(n)$ and $\beta_D(n,k)$.
$\mathcal{P}$ is a \emph{property} for a set system if it is invariant under isomorphism\footnote{
Two set systems $H=(X;E_1,E_2,\ldots,E_m)$ and $I = (Y;F_1,F_2,\ldots,F_m)$
are said to be isomorphic if they have the same number $m$ of subsets, and if there
exists a bijection $\varphi: X \rightarrow Y$ and a permutation $\pi$  on $M= \{1,2,\ldots, m\}$ such
that \[\varphi(E_i) = F_{\pi(i)} ~~ (i = 1, 2, ..., m) .\] 	
See page 411 of \cite{berge1973graphs} for related notions.}.
It is not hard to see that for any two isomorphic families $\mathcal{F}_1$ and $\mathcal{F}_2$ on $[n]$, $\beta_D(\mathcal{F}_1)=\beta_D(\mathcal{F}_2)$.
So, $\beta_D$ is a property of the set system.
For any two families $\mathcal{F}_1$ and $\mathcal{F}_2$, $\mathcal{F}_1 \subseteq \mathcal{F}_2$, $\beta_D(\mathcal{F}_1) \leq \beta_D(\mathcal{F}_2)$.
Therefore, $\beta_D(n)$ and $\beta_D(n,k)$ are monotone with respect to $n$.
However, $\beta_D(n,k)$ is not monotone with respect to  $k$: $\beta_{[\pm 1]}(n,2) = \lceil \log n\rceil$ (see Corollary \ref{prop:graph}),
$\beta_{[\pm 1]}(n,\frac{n}{2}) = \Omega(\sqrt{n})$ (From Observation  \ref{obs:ineq:2}) whereas 
$\beta_{[\pm 1]}(n,n-2) = 3$ (see Proposition \ref{prop:n2}).

We note that for any integer $t$, ``$\beta_D(\mathcal{F}) \leq t$'' is not {\it hereditary}\footnote{For a family $\mathcal{F}=\{A_1,\ldots,A_m\}$ on $[n]$, and a set $S \subseteq [n]$, the family $\mathcal{F}_S=\{A^s_1,\ldots,A^s_m\}$ 
	is called a family induced by $S$ on $\mathcal{F}$ if $A^s_j= A_j \cap S$, for $1\leq j \leq m$.
	A property $\mathcal{P}$ is {\it hereditary} if  $\mathcal{F} \in \mathcal{P}$ implies  $\mathcal{F}_S \in \mathcal{P}$ for every induced family $\mathcal{F}_S$ of $\mathcal{F}$, $S \subseteq [n]$.}.
This can be demonstrated with the following example.
Let $\mathcal{F}=\{\{1,2,4,5\},\{1,3,4,5\},\{2,3,4,5\}\}$ be a family on $\{1,\ldots,5\}$ and $S=\{1,2,3\}$.
$\mathcal{F}_S=\{\{1,2\},\{1,3\},\{2,3\}\}$ is the subfamily of $\mathcal{F}$ induced by $S$.
It is easy to see that when $D=[\pm 1]$, $\beta_D(\mathcal{F})=1$ whereas $\beta_D(\mathcal{F}_S)=2$.

\begin{observation}\label{obs:1}
	Let $\mathcal{F}$ be a family of subsets of $[n]$ and
	$\mathcal{F'}=\{S_1,\ldots,S_r\}$ be a $D$-secting family for $\mathcal{F}$, $r \in \mathbb{N}$ and $D=[\pm i]$.
	Then, 	$\mathcal{H}=\{H_1,\ldots,H_r\}$ is also a $D$-secting family for $\mathcal{F}$,
	where $H_i \in  \{[n]\setminus S_i, S_i\}$, $1\leq i \leq r$.
\end{observation}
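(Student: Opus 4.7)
The plan is to exploit the fact that the set $D = [\pm i] = \{-i,-i+1,\ldots,i\}$ is symmetric under negation, together with the elementary identity that replacing a subset by its complement flips the sign of the secting difference.

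First I would fix an arbitrary $A \in \mathcal{F}$ and use the hypothesis that $\mathcal{F}' = \{S_1,\ldots,S_r\}$ is $D$-secting to obtain some index $k \in \{1,\ldots,r\}$ and some $j \in D$ with
\[
|A \cap S_k| - |A \cap ([n] \setminus S_k)| \;=\; j.
\]
Next I would observe the trivial identity, for any subset $S \subseteq [n]$,
\[
|A \cap ([n] \setminus S)| - |A \cap S| \;=\; -\bigl(|A \cap S| - |A \cap ([n] \setminus S)|\bigr),
\]
so that replacing $S_k$ by its complement $[n] \setminus S_k$ produces the value $-j$ instead of $j$.

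Now I would consider the element $H_k \in \mathcal{H}$ corresponding to index $k$. By definition $H_k$ is either $S_k$ or $[n] \setminus S_k$, and the displayed identity shows that the secting difference of $A$ with respect to $H_k$ equals either $j$ or $-j$. Since $D = [\pm i]$ satisfies $x \in D \iff -x \in D$, both values lie in $D$. Hence $H_k$ witnesses the $D$-secting condition for $A$ in $\mathcal{H}$, and since $A$ was arbitrary, $\mathcal{H}$ is a $D$-secting family for $\mathcal{F}$.

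There is really no obstacle here: the whole content of the observation is the symmetry of $[\pm i]$ around $0$, and the proof amounts to the one-line identity above applied coordinate by coordinate in $\mathcal{H}$. The only thing to be slightly careful about is that the witness $S_k$ for $A$ in $\mathcal{F}'$ need not be a specific one fixed in advance, but the argument works index by index, so any choice of witness suffices.
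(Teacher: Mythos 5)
Your proof is correct, and it is exactly the natural argument: the symmetry $x \in [\pm i] \iff -x \in [\pm i]$ together with the sign-flip under complementation. The paper states Observation~\ref{obs:1} without proof, so there is no authorial argument to compare against, but what you wrote is the argument the paper is implicitly relying on, and it is complete as given.
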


For the rest of the section, assume that $n$ is even (since it does not effect the asymptotics).
Note that when $k$ is even (resp., odd), the maximum number of $k$-sized sets $A \in \mathcal{F}$ 
that can be bisected with any set $A' \subseteq [n]$ is $\binom{\frac{n}{2}}{\frac{k}{2}}^2$ 
(resp., $2\binom{\frac{n}{2}}{\lceil\frac{k}{2}\rceil}\binom{\frac{n}{2}}{\lfloor\frac{k}{2}\rfloor}$), $k \in [n]$.
This gives a trivial lower bound for $\beta_{[\pm 1]}(n,k)$ using Stirling's approximation, i.e., $\sqrt{2\pi n}(\frac{n}{e})^n \leq n! \leq e\sqrt{n}(\frac{n}{e})^n$.
\begin{observation}\label{obs:ineq:2}
	\begin{align}
	\beta_{[\pm 1]}(n,k) \geq \frac{\binom{n}{k}}{2\binom{\frac{n}{2}}{\lceil\frac{k}{2}\rceil}\binom{\frac{n}{2}}{\lfloor\frac{k}{2}\rfloor}}= \Omega( \sqrt{\frac{k (n-k)}{n}}).
	\end{align}
\end{observation}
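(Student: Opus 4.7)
The plan is a straightforward double-counting (pigeonhole) argument applied to the family $\mathcal{F} = \binom{[n]}{k}$, followed by a routine Stirling estimate. The counting statement in the sentence immediately preceding the observation does the bulk of the work, so I would first justify that carefully.

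Fix any $A' \subseteq [n]$ with $|A'| = s$. The number of $A \in \binom{[n]}{k}$ with $|A \cap A'| = j$ equals $\binom{s}{j}\binom{n-s}{k-j}$, and the bisecting condition $||A \cap A'| - |A \cap ([n]\setminus A')|| \leq 1$ translates to $|2j - k| \leq 1$, i.e., $j \in \{\lfloor k/2 \rfloor, \lceil k/2 \rceil\}$. Hence the number of $k$-subsets bisected by $A'$ is
\[
N(s) \;=\; \binom{s}{\lceil k/2 \rceil}\binom{n-s}{\lfloor k/2 \rfloor} + \binom{s}{\lfloor k/2 \rfloor}\binom{n-s}{\lceil k/2 \rceil},
\]
with the two summands coinciding when $k$ is even. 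Examining the ratio $N(s+1)/N(s)$ shows $N$ is log-concave and symmetric about $s = n/2$, and is therefore maximized at $s = \lfloor n/2 \rfloor$; substituting gives the per-$A'$ capacity $\binom{n/2}{k/2}^2$ (even $k$) or $2\binom{n/2}{\lceil k/2 \rceil}\binom{n/2}{\lfloor k/2 \rfloor}$ (odd $k$), matching the claim in the preceding paragraph, and both cases are bounded uniformly by $2\binom{n/2}{\lceil k/2 \rceil}\binom{n/2}{\lfloor k/2 \rfloor}$.

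Now let $\mathcal{F}'$ be any $[\pm 1]$-secting family for $\mathcal{F} = \binom{[n]}{k}$. Each of the $\binom{n}{k}$ sets must be bisected by some $A' \in \mathcal{F}'$, so by pigeonhole
\[
|\mathcal{F}'| \;\geq\; \frac{\binom{n}{k}}{\max_{s} N(s)} \;\geq\; \frac{\binom{n}{k}}{2\binom{n/2}{\lceil k/2 \rceil}\binom{n/2}{\lfloor k/2 \rfloor}},
\]
which is the first inequality of the observation. The $\Omega(\sqrt{k(n-k)/n})$ estimate then follows by Stirling's inequalities $\sqrt{2\pi n}(n/e)^n \leq n! \leq e\sqrt{n}(n/e)^n$ applied term by term: the exponential factors $n^n/(k^k(n-k)^{n-k})$ appear once in the numerator and (after expanding the two denominator binomials) once in the denominator, cancelling up to a bounded multiplicative constant, while the remaining square-root prefactors combine to give exactly $\Theta(\sqrt{k(n-k)/n})$.

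The main obstacle is the unimodality/symmetry step for $N(s)$, since the parity split and the two summands in the odd case make the ratio calculation slightly messy; once that is in place, the remainder is direct counting and a routine Stirling estimate.
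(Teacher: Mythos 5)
Your proposal is correct and takes essentially the same approach as the paper: bound the number of $k$-sets bisected by any single $A'\subseteq[n]$, apply pigeonhole over the $\binom{n}{k}$ sets, then estimate with Stirling. The paper states the per-$A'$ capacity without derivation; the unimodality step you flag as potentially messy can be closed cleanly for odd $k$ via the identity $\binom{s}{\lceil k/2\rceil}\binom{n-s}{\lfloor k/2\rfloor}+\binom{s}{\lfloor k/2\rfloor}\binom{n-s}{\lceil k/2\rceil} = \frac{n-2\lfloor k/2\rfloor}{\lceil k/2\rceil}\binom{s}{\lfloor k/2\rfloor}\binom{n-s}{\lfloor k/2\rfloor}$, which is visibly log-concave and symmetric in $s$, so the maximum is at $s=\lfloor n/2\rfloor$ as you need.
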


The constant in the lower bound is $C=\frac{\sqrt{2}\pi^{2.5}}{e^4} \geq .45$.
When $k=\frac{n}{2}$, this corresponds to a lower bound of $\Omega(\sqrt{n})$ for $\beta_{[\pm 1]}(n,\frac{n}{2})$.
Moreover, using the monotone property, $\beta_{[\pm 1]}(n) \geq \beta_{[\pm 1]}(n,\frac{n}{2})=\Omega(\sqrt{n})$. 
In what follows, we derive improved upper bounds and lower bounds for  $\beta_D(n)$.
We start our discussion with the case  $D=[\pm i]$, $i \in [n]$, followed by the case $D=\{i\}$.

\section{Bounds for $\beta_{[\pm i]}(n)$}
\label{sec:Knn}

Recall that $\beta_{[\pm i]}(n)$ is the maximum of $\beta_{[\pm i]}(\mathcal{F})$ 
over all families $\mathcal{F}$ on $[n]$,
where $\beta_{[\pm i]}(\mathcal{F})$ denotes the minimum cardinality of a 
$[\pm i]$-secting family for  $\mathcal{F}$.

\subsection{Upper bounds}
\label{subsec:upknn}

\begin{lemma}\label{lemma:Knn1}
	$\beta_{[\pm i]}(n) \leq \lceil\frac{n}{2i}\rceil$.
\end{lemma}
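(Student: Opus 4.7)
The plan is to establish this upper bound by explicit construction. Since $\beta_{[\pm i]}(\mathcal{F}_1) \le \beta_{[\pm i]}(\mathcal{F}_2)$ whenever $\mathcal{F}_1 \subseteq \mathcal{F}_2$, it suffices to build a $[\pm i]$-secting family of cardinality $t := \lceil n/(2i) \rceil$ for the worst case $\mathcal{F} = 2^{[n]}$. I will begin by partitioning $[n]$ into $t$ consecutive blocks $B_1, \ldots, B_t$, each of size at most $2i$ (all but possibly the last having size exactly $2i$), and design one set $A'_j$ associated to each block.

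It is convenient to identify each candidate set $A' \subseteq [n]$ with its $\pm 1$-indicator $\chi \in \{-1,+1\}^n$ defined by $\chi(k) = +1$ iff $k \in A'$, because then the condition that $A'$ $[\pm i]$-sects $A$ becomes $|\sum_{k \in A}\chi(k)| \le i$. My goal is therefore to produce $t$ colorings $\chi_1, \ldots, \chi_t$ such that for every $A \subseteq [n]$ at least one of the sums $s_j(A) := \sum_{k \in A} \chi_j(k)$ lies in $[-i,i]$. The plan is to define the $\chi_j$ block-wise so that passing from $\chi_{j-1}$ to $\chi_j$ modifies the coloring on (at most) a single block $B_j$, and the resulting sequence $s_1(A), s_2(A), \ldots, s_t(A)$ changes in small increments whose sizes are controlled by the intersection sizes $|A \cap B_j|$, each at most $2i$.

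Once this monotone-sweep structure is in place, a discrete intermediate-value argument closes the proof: the sequence $s_j(A)$ sweeps from a negatively-biased value to a positively-biased one, and because consecutive differences are small relative to the total variation, the sequence must hit the window $[-i, i]$ for some $j$. The verification then reduces to a short case analysis on the block-profile $(|A \cap B_1|, \ldots, |A \cap B_t|)$ of $A$, using the bound $|A \cap B_\ell| \le 2i$.

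The main obstacle is precisely to design the block-wise transition in such a way that adversarial subsets $A$---in particular those concentrated near a single block or straddling a block boundary---do not cause the sequence $s_j(A)$ to jump from below $-i$ directly to above $+i$ in one step. Overcoming this forces a careful choice of the specific $i$ elements within each block that change color at step $j$, and the verification that this choice succeeds amounts to ruling out the boundary-concentrated configurations of $A$ that would otherwise break the intermediate-value argument.
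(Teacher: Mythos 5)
The proposal follows the paper's high-level strategy — construct a sweeping sequence of colorings and apply a discrete intermediate-value argument — but it leaves unspecified exactly the part that makes that strategy work, and you yourself flag this as the ``main obstacle.'' Concretely, the claim that the increments $s_j(A)-s_{j-1}(A)$ are ``controlled by the intersection sizes $|A\cap B_j|$, each at most $2i$'' is not established and is in fact false for the naive block-flip you appear to be describing. If $\chi_{j-1}$ and $\chi_j$ differ on an entire block $B_j$ of size $2i$ whose elements all change from $-1$ to $+1$, then for a set $A \supseteq B_j$ the jump $s_j(A)-s_{j-1}(A)$ equals $2\cdot 2i = 4i$, which can hop clean over the window $[-i,i]$. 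The missing idea is the \emph{swap structure}: at each step exactly $i$ elements leave the current set and exactly $i$ enter, so the signed change is $2b-2a$ with $a,b\in\{0,\dots,i\}$, and cancellation gives $|s_j-s_{j+1}|\le 2i$. Without committing to this (or an equivalent device), the ``small increments'' claim has no basis, and the intermediate-value argument does not close.

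There is a second gap of the same kind: the assertion that ``the sequence $s_j(A)$ sweeps from a negatively-biased value to a positively-biased one'' is not justified, and for many $A$ the endpoints $s_1(A)$ and $s_t(A)$ have the same sign. The paper handles this by starting at $B_1=\{1,\dots,n/2\}$ and observing that the swap process, continued one more (virtual) step to $B_{t+1}=[n]\setminus B_1$, gives $c_{t+1}=-c_1$; the last real increment $|c_t - c_{t+1}|\le 2i$ then forces $c_t$ close enough to $-c_1$ that a sign change (or entry into $[-i,i]$) must occur among $c_1,\dots,c_t$. This bookkeeping is absent from your sketch. In short, you have identified the shape of the argument but not the two concrete devices — in/out swap balance for the increment bound, and the complement endpoint for the sweep — that constitute the actual proof; ``a careful choice of the specific $i$ elements'' is the content of the lemma, not a postponable detail.
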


\begin{proof}
	Let $\mathcal{F}$ denotes the family consisting of all the non-empty subsets of $[n]$.
	In what follows, we demonstrate a construction that yields a $[\pm i]$-secting family of cardinality $\frac{n}{2i}$ 
	for $\mathcal{F}$, assuming  $2i$ divides $n$.
	Let $B_1=\{1,2,\ldots,\frac{n}{2}\}$.
	The set $B_2$ is obtained from $B_1$ by swapping the largest $i$ elements of $B_1$ with the smallest $i$ elements in $[n]\setminus B_1$. So,  $B_2=\{1,2,\ldots,\frac{n}{2}-i,\frac{n}{2}+i,\frac{n}{2}+i-1,\ldots,\frac{n}{2}+1\}$ (we write the swapped elements in descending order for convenience).
	In general, $B_{j+1}$ is obtained from $B_{j}$ by swapping the 
	largest $i$ elements of $B_1 \cap B_j$ (i.e., $\{\frac{n}{2}-ij+1,\ldots,\frac{n}{2}-ij+i \}$)
	with the smallest $i$ elements of $([n] \setminus B_1) \cap ([n] \setminus B_j)$ (i.e., $\{\frac{n}{2}+ij-i+1,\ldots,\frac{n}{2}+ij\}$). 
	We stop the process at $B_{\frac{n}{2i}}=\{1,\ldots,i,n-i,n-(i-1),\ldots,\frac{n}{2}+1\}$. 
	Let $\mathcal{F}'=\{B_1,\ldots,B_\frac{n}{2i}\} $.
	
	We prove that $\mathcal{F}'$ is indeed a  $[\pm i]$-secting family for $\mathcal{F}$.
	For the sake of contradiction, we assume that there exists some $A \subseteq [n]$  such that 
	$|A \cap B_j| -  |A \cap ([n] \setminus B_j)| \not\in D$, for all $B_j \in \mathcal{F}'$.
	Let $c_j {\mathrel{\mathop:}=} |A \cap B_j| -  |A \cap ([n] \setminus B_j)|$, $1 \leq j \leq {\frac{n}{2i}}$ . 
	From the construction of $B_{j+1}$ from $B_j$, observe that  $|c_j-c_{j+1}| \leq |B_j \triangle B_{j+1}|=2i$, $1 \leq j \leq {\frac{n}{2i}}-1$.
	Clearly, $c_1=d$, for some $d \not\in \{-i,\ldots,i\}$.
	
	\begin{claimm}\label{claim:1}
		$c_{\frac{n}{2i}} \leq  -d+2i$ for $d > 0$ (resp. $\geq -d-2i$ for $d < 0$).
	\end{claimm}
	\begin{proof}
		Let $B_{\frac{n}{2i}+1}$ be the set obtained from $B_{\frac{n}{2i}}$ 
		by swapping the largest $i$ elements $\{ 1,\ldots,i \}$ of $B_1 \cap B_{\frac{n}{2i}}$
		with the smallest $i$ elements $\{n-i+1,\ldots,n\}$ of $([n] \setminus B_1) \cap ([n] \setminus B_{\frac{n}{2i}})$.
		Let $c_{\frac{n}{2i}+1}=|A \cap B_{\frac{n}{2i}+1}| -  |A \cap ([n] \setminus B_{\frac{n}{2i}+1})|$.
		Observe that since $c_1=d$ and $B_{\frac{n}{2i}+1}$ is $[n] \setminus B_1$, $c_{\frac{n}{2i}+1}=-d$.
		Moreover, $|c_{\frac{n}{2i}}-c_{\frac{n}{2i}+1}| \leq 2i$.
		So, $c_{\frac{n}{2i}}$ is at most  $-d+2i$.
		The proof for the case of $d < 0$ is similar.
		\qed
	\end{proof}
	
	We now have these exhaustive cases.
	\begin{enumerate}
		\item $d \geq 2i$ (or $d \leq -2i$): 
		Note that $D =\{-i,\dots,+i\}$ and  $|c_j-c_{j+1}| \leq 2i$, for all $1 \leq j \leq {\frac{n}{2i}}-1$.
		Using Claim \ref{claim:1}, $c_{\frac{n}{2i}} \leq 0$ (resp., $c_{\frac{n}{2i}} \geq 0$). 
		Therefore, there exists at least one index $l$, $1 \leq l \leq {\frac{n}{2i}}-1$,
		such that $c_l \cdot c_{l+1} \leq 0$. Observe that either of $c_l$ or $c_{l+1}$, or both lie in  $\{-i,\dots,+i\}$.
		This is a contradiction to our assumption that $A$ is not $D$-sected by $\mathcal{F}'$. 
		\item $i < d < 2i$: From Claim \ref{claim:1}, it is clear that $c_{\frac{n}{2i}} < i$.
		So, if there exists an index $l$, $1 \leq l \leq {\frac{n}{2i}}-1$,
		such that $c_l \cdot c_{l+1} \leq 0$, either $c_l$ or $c_{l+1}$ or both lie in  $\{-i,\dots,+i\}$.
		Otherwise, $c_{\frac{n}{2i}} \in \{0,\dots,i-1\} \subset D$ as desired.
		\item $-2i < d < -i$: Similar to the previous case.
	\end{enumerate}
	
	This establishes that $\beta_{[\pm i]}(n)$ is at most $\frac{n}{2i}$, when $2i$ divides $n$.
	Note that when $n$ is not divisible by $2i$, we can construct $\mathcal{F}'$ of cardinality  $\lceil\frac{n}{2i}\rceil$ 
	with the same procedure, where $B_{\lceil\frac{n}{2i}\rceil}= \{1,\dots,p,n-p,n-(p-1),\dots,\frac{n}{2}+1\}$, $p=n \mod 2i$.
	This completes the proof of Lemma \ref{lemma:Knn1}.
	\qed
\end{proof}

\subsection{Lower bounds}
\label{subsec:lowknn}

To obtain a lower bound for $\beta_D(n)$, it is natural to remove 1 or 2 points  from $[n]$
and to proceed with induction.
However, we note that, even when $D=\{-1,0,1\}$, such a direct induction only yields a lower bound of $\log n$, which is 
not useful (since we already have a lower bound of $\Omega(\sqrt{n})$ from Section \ref{sec:prelim}).
In order to derive a tight lower bound 
for $\beta_D(n)$, we use  the vector representations of sets and a polynomial representation
of Boolean functions.

For any subset $A \subseteq [n]$, let
(i) $X_A=(x_1,\ldots,x_n) \in \{0,1\}^n$ be the incidence vector such that $x_i=1$ if and only if $i \in A$; and,
(ii)$R_A=(r_1,\ldots,r_n) \in \{-1,1\}^n$ be the incidence vector such that $r_i=1$ if and only if $i \in A$.
Observe that for any two subsets $A$ and $A'$ of $[n]$,
the dot product of $X_A=(x_1,\ldots,x_n)$ with $R_{A'}=(r_1,\ldots,r_n)$,
denoted by $\left\langle X_A,R_{A'}\right\rangle$, is 
equivalent to $|A \cap A'| -  |A \cap ([n] \setminus A')|$.
For an even (resp., odd) cardinality subset $A \in \mathcal{F}$, note that the corresponding incidence vector $X_A=(x_1,\ldots,x_n)$ is even (resp., odd).
Let $\mathcal{F}$ be a family of subsets of $[n]$.
Observe that for any even subset $A_e \in \mathcal{F}$ and any arbitrary subset $A' \subseteq [n]$,
$\left\langle X_{A_e},R_{A'} \right\rangle \equiv 0 \mod 2$, i.e., $\left\langle X_{A_e},R_{A'}\right\rangle \in \{0,\pm 2,\pm 4,\ldots\}$.
Moreover, for any odd subset $A_o \in \mathcal{F}$,
$\left\langle X_{A_o},R_{A'} \right\rangle \equiv 1 \mod 2$, i.e., $\left\langle X_{A_o},R_{A'}\right\rangle \in \{\pm 1,\pm 3,\pm 5,\ldots\}$.

We demonstrate that the polynomial
representation of Boolean functions \cite{saks1990,saks1993} is useful to establish lower bounds for $\beta_D(n)$.
Let $f: \{-1,1\}^n \rightarrow \{-1,1\}$ be a Boolean function on $n$ variables,say $y_1,\ldots,y_n$.
For instance, the \emph{parity}
function on $n$ variables is simply equal to the monomial $\prod_{j=1}^{n}y_j$. 
Let $sign: \mathbb{R} \setminus \{0\} \rightarrow \{0,1\}$ be a function defined as
(i) $sign(\alpha)= 1$ if $\alpha >0$, and (ii) $sign(\alpha)= 0$, otherwise,
for $\alpha \in \mathbb{R} \setminus \{0\}$.
A multilinear polynomial $P(y_1,\dots,y_n)$ \emph{weakly represents} $f$ if $P$ is nonzero and for every $Y=(y_1,\dots,y_n)$ where $P(Y)$ is nonzero, $sign(f(Y))$ = $sign(P(Y))$. 
The \emph{weak degree} of a function $f$ is
the degree of the lowest degree polynomial which weakly represents $f$. We have the following result 
that follows from Lemma 2.29 of  \cite{saks1993} originally proved by Minsky and Papert in \cite{minsky1969perceptron}.

\begin{lemma}\label{lemma:weak}
	The weak degree of the parity function on $n$ variables is $n$.
\end{lemma}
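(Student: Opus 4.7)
The upper bound is immediate: the monomial $\prod_{j=1}^{n} y_j$ equals the parity function on $\{-1,+1\}^n$, so it weakly represents parity and has degree $n$. For the matching lower bound, my plan is to reduce the multivariate problem to a univariate one via the classical symmetrization trick of Minsky and Papert, and then to apply a finite-difference identity.

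Suppose $P$ is a multilinear polynomial of degree $d$ that weakly represents parity. I would first replace $P$ by its $S_n$-symmetrization
\[
\tilde P(Y) \;=\; \frac{1}{n!}\sum_{\sigma \in S_n} P(\sigma Y),
\]
which has degree at most $d$. Because parity is itself $S_n$-invariant, every nonzero summand $P(\sigma Y)$ has sign equal to $\prod_j y_j$; thus the nonzero summands cannot cancel, which simultaneously shows that $\tilde P$ is not the zero polynomial (picking any $Y^{\ast}$ with $P(Y^{\ast}) \neq 0$) and that $\tilde P$ continues to weakly represent parity. Since $\tilde P$ is symmetric and of degree at most $d$, the substitution $y_i = 1 - 2 x_i$ expresses $\tilde P$ on $\{-1,+1\}^n$ as $Q(k)$, where $k$ is the number of $+1$ coordinates of $Y$ and $Q$ is a univariate polynomial of degree at most $d$.

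The weak-representation condition translates to a one-variable sign condition on $Q$: for each $k \in \{0,1,\ldots,n\}$, either $Q(k) = 0$ or $(-1)^{n-k} Q(k) > 0$, since a vector with $k$ coordinates equal to $+1$ has parity $(-1)^{n-k}$. In particular, every term $(-1)^{n-k}\binom{n}{k} Q(k)$ is nonnegative. I would then invoke the forward-difference identity
\[
\Delta^n Q(0) \;=\; \sum_{k=0}^{n}(-1)^{n-k}\binom{n}{k}\,Q(k).
\]
If $d < n$, the polynomial $\Delta^n Q$ is identically zero and the right-hand sum must vanish; but a nonzero polynomial of degree $< n$ cannot be zero at all $n+1$ integers $0, 1, \ldots, n$, so at least one summand in the sum is strictly positive. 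This contradiction forces $d \geq n$.

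The main obstacle, and the only place where the $S_n$-symmetry of parity is genuinely used, is the symmetrization step: one must verify that averaging over $S_n$ both preserves nonvanishing of $P$ and preserves the weak-representation property. Once $\tilde P$ is seen to inherit these, the passage to a univariate $Q$ is standard and the finite-difference computation is a short calculation.
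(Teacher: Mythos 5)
Your argument is correct and is exactly the Minsky--Papert symmetrization proof: averaging $P$ over $S_n$ preserves both nonvanishing and weak representation because parity is $S_n$-invariant, the resulting symmetric multilinear $\tilde P$ collapses on $\{-1,+1\}^n$ to a univariate $Q$ of degree at most $d$ with $(-1)^{n-k}Q(k)\ge 0$ and not identically zero on $\{0,\dots,n\}$, and the $n$th forward difference identity then forces $\deg Q\ge n$. The paper itself supplies no proof of this lemma, citing Lemma 2.29 of Saks (originally due to Minsky and Papert); your proof is precisely the standard argument behind that citation.
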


In what follows, we use the notion of weak degree of the parity function to establish Theorem \ref{thm:Knn}.

\begin{lemma}\label{lemma:Knn2}
	$\beta_{[\pm i]}(n) \geq \lceil\frac{n}{2i}\rceil$.
\end{lemma}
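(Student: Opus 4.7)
The plan is to apply the polynomial method, building a polynomial $P$ out of the $B_j$'s whose square weakly represents the parity function $\chi(y)=\prod_k y_k$ on $\{-1,+1\}^n$. Lemma~\ref{lemma:weak} then forces $\deg P^2 \geq n$, so combining with an upper bound $\deg P \leq ti$ yields the desired $t \geq \lceil n/(2i)\rceil$.

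Let $\mathcal{F}' = \{B_1,\ldots,B_t\}$ be a $[\pm i]$-secting family for $\mathcal{F} = 2^{[n]}$. For each $j$, let $r_j \in \{-1,+1\}^n$ be the $\pm 1$-incidence vector of $B_j$ and define the affine form
\[
g_j(y) \;=\; \tfrac{1}{2}\Big(2|B_j| - n + \sum_{k=1}^n r_{j,k}\, y_k\Big).
\]
A direct calculation gives $g_j(R_A) = |A \cap B_j| - |A \cap ([n]\setminus B_j)|$ for every $A \subseteq [n]$, so ``$A$ is sected by $B_j$'' is the same as $g_j(R_A) \in \{-i,\ldots,i\}$; moreover, since this is the difference of two nonnegative integers whose sum is $|A|$, the value $g_j(R_A)$ is always congruent to $|A|$ modulo $2$.

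Next, choose $S \subseteq \{-i,\ldots,i\}$ of size exactly $i$, consisting of integers of a single parity: take $S = \{0, \pm 2, \ldots, \pm (i-1)\}$ if $i$ is odd, and $S = \{\pm 1, \pm 3, \ldots, \pm(i-1)\}$ if $i$ is even. Set
\[
P(y) \;=\; \prod_{j=1}^t \prod_{s \in S}\big(g_j(y) - s\big),
\]
so $\deg P \leq ti$. For any $A$ whose cardinality has the parity of the elements of $S$, the secting witness $B_{j_0}$ forces $g_{j_0}(R_A) \in S$, making one factor vanish, hence $P(R_A) = 0$. For any $A$ of the opposite parity, every factor $g_j(R_A) - s$ is ``(one parity) minus (the other parity)'', i.e.\ an odd integer, and therefore nonzero, giving $P(R_A) \neq 0$. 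Thus $P$ vanishes on exactly one parity class of $\{-1,+1\}^n$ and is nonzero on the other.

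In particular $P$ is not identically zero, and $P^2$ is a nonzero polynomial of degree at most $2ti$ with $P^2 \geq 0$ everywhere, equality holding precisely on the vanishing parity class. Since the $\pm 1$-valued indicator of the nonzero class equals $\pm (-1)^{|A|} = \pm (-1)^n \chi(y)$, the polynomial $P^2$ weakly represents $\pm \chi$, that is, the parity function up to a global sign. Lemma~\ref{lemma:weak} then gives $\deg P^2 \geq n$, so $2ti \geq \deg P^2 \geq n$, which yields $t \geq \lceil n/(2i)\rceil$. The main delicate point is the parity-homogeneous choice of $S$ of size exactly $i$: it is only because $g_j(R_A)$ is confined to one parity that $|S|$ can be shrunk from the naive $2i+1$ down to $i$, and this is precisely what produces the sharp $2i$ in the denominator of the bound rather than the weaker $2(i+1)$.
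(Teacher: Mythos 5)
Your argument is correct and essentially the same as the paper's: your linear factors $g_j - s$ pair off exactly into the paper's quadratic factors $\langle X,R\rangle^2 - j^2$, both proofs square the resulting product and invoke Lemma~\ref{lemma:weak}, and your only real departure is presentational --- you work directly in $\{-1,+1\}^n$ with a single parity-homogeneous set $S$, whereas the paper first builds the polynomial over $\{0,1\}^n$, changes domain via $x_k = \frac{1-y_k}{2}$, and splits into separate even-$i$, odd-$i$, and $i\le 1$ cases. One step to make explicit: the paper's definition of weak representation applies only to \emph{multilinear} polynomials, and your $P^2$ is not multilinear, so before appealing to Lemma~\ref{lemma:weak} you should pass to the multilinearization of $P^2$ (repeatedly replace each $y_k^2$ by $1$); this agrees with $P^2$ on $\{-1,+1\}^n$, has degree at most $\deg P^2 \le 2ti$, and is the polynomial that actually weakly represents $\pm\chi$, so the conclusion $t \ge \lceil n/(2i) \rceil$ goes through unchanged.
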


\begin{proof}
	
	Let $\mathcal{F}$ denote the $2^n-1$ non-empty subsets of $[n]$.
	Let $\mathcal{F}'$ be a minimum cardinality $[\pm i]$-secting family for $\mathcal{F}$.
	Let $\mathcal{R}$ be set of incidence vectors of sets in $\mathcal{F}'$, where each vector $R$ in $\mathcal{R}$ is
	an element of $\{-1,+1\}^n$.
	We start the analysis assuming $i$ is even and $i>0$, and then extend to odd $i$. 
	For every odd set $A_o \in \mathcal{F}$, there exists a vector $R \in \mathcal{R}$ such that
	$\left\langle X_{A_o}, R \right\rangle-d =0$, for some  $d \in \{-i+1,-i+3,\ldots,i-1\}$.
	Let $X=(x_1,\ldots,x_n) \in \{0,1\}^n$. We use
	$X$ to denote the incidence vector of any arbitrary set in $\mathcal{F}$.
	Consider the polynomial $M$ on $X=(x_1,\ldots,x_n)$ as
	\begin{align}\label{eq:poly1}
	M(X)= \left(\prod_{R \in \mathcal{R}} \left(\left(\left\langle X,R \right\rangle \right)^2-1^2\right) 
	\prod_{R \in \mathcal{R}} \left(\left(\left\langle X,R \right\rangle \right)^2-3^2\right) \ldots
	\prod_{R \in \mathcal{R}} \left(\left(\left\langle X,R \right\rangle \right)^2-(i-1)^2\right) \right)^2.
	\end{align}
	
	From the definitions of $\mathcal{R}$ and $M$, it is clear that $M(X)$ is 
	(i) zero when $X=X_{A_o}$ for all odd subsets $A_o \in \mathcal{F}$; and 
	(ii) positive when $X=X_{A_e}$ for all even subsets $A_e \in \mathcal{F}$.

	\subsubsection*{Domain conversion and multilinearization}
	Recall that a vector $T \in \{0,1\}^n$ is even if the number of $1$'s in $T$ is even and
	a vector $T \in \{-1,1\}^n$ is even if the number of $-1$'s in $T$ is even.
	Consider the polynomial $N$ on $Y=(y_1,\ldots,y_n)$, where each $y_i=\pm 1$.
	\begin{align}\label{eq:poly2}
	N(y_1,\ldots,y_n)=M(x_1,\ldots,x_n),
	\end{align}
	where $x_j=\frac{1-y_j}{2}$, $1 \leq j \leq n$.
	Note that if $y_i = -1$ (resp. 1), then $\frac{1-y_i}{2}$ becomes 1 (resp. 0).
	So, if some vector $Y=(y_1,\ldots,y_n)$ includes an even number of $-1$'s,
	then the vector $(\frac{1-y_1}{2},\ldots,\frac{1-y_n}{2})$ has an even number of $1$'s, i.e., the reduction of the vector $(y_1,\ldots,y_n)$ from the $\{-1,1\}^n$ domain to
	$(\frac{1-y_1}{2},\ldots,\frac{1-y_n}{2})$ in the $\{0, 1\}^n$ domain preserves the definition of {\it evenness}.
	Note that (i) $N(Y)$ evaluates to zero, when $Y=Y_{A_o} \in \{-1,1\}^n$ for all odd subsets $A_o \in \mathcal{F}$;
	(ii) $sign(N(Y)= sign(parity(Y))$, when $Y=Y_{A_e} \in \{-1,1\}^n$ for all even subsets $A_e \in \mathcal{F}$. 
	Let $N'(Y=(y_1,\ldots,y_n))$ be the multilinear polynomial obtained from $N(Y=(y_1,\ldots,y_n))$ by 
	repeatedly replacing each $y_i^2$ in the monomials by 1.
	$deg(N'(Y)) \leq deg(N(Y))$ and $N'(Y)=N(Y)$, for vectors $Y \in \{-1,1\}^n$.
	
	Clearly, $N'(Y)$ weakly represents the parity function.
	Each term $(\prod_{R \in \mathcal{R}} ((\left\langle X,R \right\rangle)^2-j^2))^2$, $j \in \{1,\ldots,(i-1)\}$, contributes a degree of $4|\mathcal{R}|$ to the  degree of $M(X)$, and, there are $\frac{i}{2}$ such terms.
	Therefore, the degree of $M(X)$ is $2|\mathcal{R}|i$.
	Moreover, from Equation \ref{eq:poly2}, $deg(N'(Y)) \leq deg(N(Y))=deg(M(X))$.
	However, from Lemma \ref{lemma:weak},
	$deg(N'(Y)) \geq n$, which implies $\beta_{[\pm i]}(n) = |\mathcal{R}| \geq  \frac{n}{2i}$.

	If $i> 1$ is odd, $M(X)$ is defined as
	\begin{align*}
	\prod_{R \in \mathcal{R}} \left((\left\langle X,R \right\rangle)^2\right)
	\left(\prod_{R \in \mathcal{R}} \left((\left\langle X,R \right\rangle)^2-2^2\right) \prod_{R \in \mathcal{R}} 
	\left((\left\langle X,R \right\rangle)^2-4^2\right)\ldots \prod_{R \in \mathcal{R}} \left((\left\langle X,R \right\rangle)^2-(i-1)^2 \right)\right)^2.
	\end{align*}
	Observe that $M(X)$ vanishes for all even vectors and is positive for all odd vectors.  
	The polynomial $N$ on $Y=(y_1,\ldots,y_n)$, where each $y_i=\pm 1$, is now defined as
	\begin{align}\label{eq:polyy2}
	N(y_1,\ldots,y_n)=-M(x_1,\ldots,x_n).
	\end{align}
	Note that degree of $M(X)$ is $2|\mathcal{R}|+4|\mathcal{R}|\frac{i-1}{2}=2|\mathcal{R}|i$ and the rest of the arguments
	are same as the previous case.
	
	We are only left with the cases when $i=0$ and $i=1$. 
	Observe that $\beta_D(n)$ for the case of $D=\{0\}$ and $D=\{-1,0,1\}$ is same:
	any bisecting family for a family $\mathcal{F}_1$ consisting of only the $2^{n-1}-1$ non-empty even subsets of $[n]$
	must bisect all the $2^{n}-1$ subsets of $[n]$.
	In this case, take $M(X)=\prod_{R \in \mathcal{R}} \left((\left\langle X,R \right\rangle)^2\right)$ and proceed 
	as before to get $\beta_{[\pm 1]}(n) \geq \frac{n}{2}$.
	
	\qed
\end{proof} 

From Lemmas \ref{lemma:Knn1} and \ref{lemma:Knn2}, Theorem \ref{thm:Knn} follows, which is restated below.

\begin{statement}
	$\beta_{[\pm i]}(n) = \lceil \frac{n}{2i} \rceil$, $n \in \mathbb{N}$, $i \in [n]$.
\end{statement}

Let $\mathcal{F}$ consists of $2^n-1$ non-empty subsets of $[n]$.
Then, Theorem \ref{thm:Knn} asserts that the construction of $[\pm i]$-secting family  of cardinality $\lceil \frac{n}{2i}\rceil$
in Section \ref{subsec:upknn} is indeed optimal.
Moreover, Theorem \ref{thm:Knn} implies that if we allow the imbalances of intersections up to $\sqrt{n}$, i.e., $D= [\pm \sqrt{n}]$,
then a family $\mathcal{F}'$ of cardinality $\frac{\sqrt n}{2}$ is necessary and sufficient for $\mathcal{F}$.
\begin{corollary}
	\label{cor:sqrtn}
	For $D=[\pm \sqrt{n}]$, $n \in \mathbb{N}$,
	$\beta_D(n) = \lceil \frac{\sqrt{n}}{2}\rceil$.
\end{corollary}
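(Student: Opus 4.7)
The plan is to obtain this as an immediate specialization of Theorem \ref{thm:Knn}, which has already been established via Lemmas \ref{lemma:Knn1} and \ref{lemma:Knn2}. Since Theorem \ref{thm:Knn} gives $\beta_{[\pm i]}(n) = \lceil \frac{n}{2i} \rceil$ for every $i \in [n]$, I would simply substitute $i=\sqrt n$ and simplify $\lceil \frac{n}{2\sqrt n}\rceil = \lceil \frac{\sqrt n}{2}\rceil$.

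More concretely, I would first invoke the upper bound from Lemma \ref{lemma:Knn1}, which produces an explicit $[\pm\sqrt n]$-secting family of size $\lceil \sqrt n/2\rceil$ via the ``swap'' construction applied to the balanced half $B_1=\{1,\ldots,n/2\}$ with swap-block size $i=\sqrt n$. Then I would invoke the matching lower bound from Lemma \ref{lemma:Knn2} with $i=\sqrt n$, where the polynomial $M(X)$ in Equation \eqref{eq:poly1} (or its odd-$i$ variant) has degree $2|\mathcal R|\sqrt n$, which by the weak-degree lower bound (Lemma \ref{lemma:weak}) for the parity function on $n$ variables must be at least $n$, forcing $|\mathcal R| \geq \sqrt n/2$.

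There is essentially no obstacle beyond the trivial integrality issue: when $\sqrt n$ is not an integer one takes $i = \lceil \sqrt n \rceil$ (or $\lfloor\sqrt n\rfloor$) in Theorem \ref{thm:Knn}, and the ceiling absorbs the rounding, still yielding $\lceil \sqrt n/2\rceil$. So the corollary follows from Theorem \ref{thm:Knn} in one line, and it merely records the nice consequence that allowing intersection imbalances up to $\sqrt n$ reduces the minimum family size from the $\Omega(\sqrt n)$ trivial lower bound of Observation \ref{obs:ineq:2} (for $D=[\pm 1]$) to exactly $\lceil \sqrt n/2\rceil$.
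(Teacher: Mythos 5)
Your proof is correct and takes exactly the same approach as the paper: Corollary~\ref{cor:sqrtn} is presented in the paper simply as the specialization of Theorem~\ref{thm:Knn} to $i = \sqrt{n}$, with no further argument supplied. Your extra remarks about the swap construction, the polynomial degree bound, and the rounding of $\sqrt{n}$ just unpack that substitution and do not change the route.
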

In what follows, we demonstrate that $D$-secting families of cardinality much smaller than $\frac{\sqrt n}{2}$
can be computed when $|\mathcal{F}|$ is small.

\subsection{Computing $\beta_{[\pm i]}(\mathcal{F})$ for arbitrary families}
\label{sec:chernoff}

In Section \ref{sec:intro}, we discussed about the discrepancy interpretation of the bisection problems.
Probabilistic method is an useful tool in computing low discrepancy colorings. 
The following Chernoff's bound is used extensively to establish upper bounds on the discrepancy of hypergraphs.
\begin{lemma}\label{lemma:chernoff}\cite{chaz2000}
	If $X=\sum_{i=1}^n X_i$ is the sum of $n$ independent random variables distributed uniformly over $\{-1,1\}$,
	then for any $\Delta>0$,
	\begin{align*}
	P[|X| > \Delta] \leq 2e^{-\frac{\Delta^2}{2n}}.
	\end{align*}
\end{lemma}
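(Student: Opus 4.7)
The plan is to prove the stated Chernoff--Hoeffding tail bound for Rademacher sums by the classical moment generating function (MGF) method, using Markov's inequality and then optimizing a free parameter. I would proceed in three stages.

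First, I would bound the MGF of each summand. For $X_i$ uniform on $\{-1,+1\}$ and any $\lambda \in \mathbb{R}$, a direct computation gives $E[e^{\lambda X_i}] = \tfrac{1}{2}(e^{\lambda}+e^{-\lambda}) = \cosh(\lambda)$. Comparing Taylor coefficients and using $(2k)! \geq 2^{k}\, k!$ yields the sub-Gaussian bound $\cosh(\lambda) \leq e^{\lambda^2/2}$. By independence of the $X_i$'s, the MGF tensorizes:
\begin{align*}
E[e^{\lambda X}] \;=\; \prod_{i=1}^{n} E[e^{\lambda X_i}] \;\leq\; e^{n\lambda^2/2}.
\end{align*}

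Second, for any $\lambda>0$ apply Markov's inequality to the non-negative random variable $e^{\lambda X}$:
\begin{align*}
P[X \geq \Delta] \;=\; P\bigl[e^{\lambda X} \geq e^{\lambda \Delta}\bigr] \;\leq\; e^{-\lambda \Delta}\, E[e^{\lambda X}] \;\leq\; \exp\!\Bigl(\tfrac{n\lambda^2}{2} - \lambda \Delta\Bigr).
\end{align*}
The exponent is minimized at $\lambda = \Delta/n$, giving $P[X \geq \Delta] \leq e^{-\Delta^2/(2n)}$. Since $-X$ has the same distribution as $X$, the identical bound holds for $P[X \leq -\Delta]$. A union bound then finishes the proof:
\begin{align*}
P\bigl[|X| > \Delta\bigr] \;\leq\; P[X \geq \Delta] + P[X \leq -\Delta] \;\leq\; 2\, e^{-\Delta^2/(2n)}.
\end{align*}

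The only step that is not purely mechanical is the sub-Gaussian MGF estimate $\cosh(\lambda) \leq e^{\lambda^2/2}$; once it is in hand, the rest is standard tensorization by independence, a single application of Markov, and optimization in one scalar variable. Since the statement is cited as a known result from \cite{chaz2000}, this sketch is only for completeness, and the bound will in any case be applied off-the-shelf in the upcoming probabilistic construction of a $[\pm i]$-secting family of size $t$ witnessing $\beta_{[\pm i]}(\mathcal{F}) \leq t$.
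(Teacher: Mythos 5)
The paper does not prove this lemma; it is simply cited from \cite{chaz2000} and used as a black box in the proof of Theorem~\ref{thm:prob}, so there is no in-paper argument to compare against. Your proof is the standard and correct Chernoff--Hoeffding derivation for Rademacher sums: the MGF bound $E[e^{\lambda X_i}]=\cosh(\lambda)\leq e^{\lambda^2/2}$ (justified termwise via $(2k)!\geq 2^k k!$), tensorization over independence, Markov's inequality on $e^{\lambda X}$, optimization at $\lambda=\Delta/n$, and a symmetry-plus-union-bound step to pass from the one-sided to the two-sided tail. All steps check out and this is exactly the argument one would find in the cited source, so it is a faithful reconstruction rather than a genuinely different route.
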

In what follows, we obtain an upper bound on $\beta_{[\pm i]}(\mathcal{F})$, when $\mathcal{F}$ is a family of arbitrary sized subsets,
with a simple application of Lemma \ref{lemma:chernoff}.

\subsubsection*{Proof of Theorem \ref{thm:prob}}
\begin{statement}
	Let $\mathcal{F}$ be a family of subsets of $[n]$ and let $|\mathcal{F}|=m$.
	Let $D=[\pm i]$, where $i=\sqrt{\frac{3n\ln (2m)}{t}}$ and $t \leq \frac{1}{2}\log m$.
	Then, $\beta_D(\mathcal{F}) \leq t$. 
\end{statement}

\begin{proof}
	We pick a set $\mathcal{F}'$ of $t$ random subsets $\{A_1',\ldots,A_t'\}$ of $[n]$,
	where for each $j$, $1\leq j \leq t$, a point $a \in [n]$ is chosen independently and uniformly at random into $A_j'$. 
	Let $R_{A_j'}=(r_1,\ldots,r_n) \in \{-1,1\}^n$ be the incidence vector corresponding to $A_j'$: $r_i$ is 1 if and only if $i \in A_j'$.
	For any subset $A \in \mathcal{F}$, 
	$|A \cap A_j'|-|A \cap ([n]\setminus A_j')|$ can be viewed as sum of $|A|$ random variables distributed uniformly over $\{-1,1\}$.
	We say a subset  $A \in \mathcal{F}$ is bad with respect to subset $A_j' \in \mathcal{F}'$  if $||A \cap A_j'|-|A \cap ([n]\setminus A_j')|| > \sqrt{\frac{3|A|\ln (2m)}{t}}$.
	Using Chernoff's bound,
	the probability that a subset  $A \in \mathcal{F}$ is bad  with respect to a random subset $A_j' \in \mathcal{F}'$ is
	\begin{align*}
	P\left[||A \cap A_j'|-|A \cap ([n]\setminus A_j')|| > \sqrt{\frac{3|A|\ln (2m)}{t}}\right] \leq 
	2e^{-\frac{3|A|\ln (2m)}{2t|A|}}=2(\frac{1}{2m})^\frac{3}{2t}.
	\end{align*}
	Any subset $A$ is bad  with respect to $\mathcal{F}'$ if $||A \cap A_j'|-|A \cap ([n]\setminus A_j')|| > \sqrt{\frac{3|A|\ln (2m)}{t}}$, for all $A_j' \in \mathcal{F}'$.
	So, $A$ is bad  with respect to $\mathcal{F}'$  with probability at most $2^t(\frac{1}{2m})^\frac{3t}{2t}=\frac{2^{t-1.5}}{m^{1.5}}$.
	Using union bound,
	the probability that some subset in $\mathcal{F}$ is bad  with respect to $\mathcal{F}'$ is at most 
	$m\frac{2^{t-1.5}}{m^{1.5}}$.
	So, if $2^t \leq \sqrt{m}$ (i.e., $t \leq \frac{1}{2}\log m$),
	the probability that any subset in $\mathcal{F}$ is bad  with respect to $\mathcal{F}'$ is at most $\frac{1}{2\sqrt{2}}$.
	Since the failure probability is less than $\frac{1}{2}$, in expected two iterations,
	we can obtain a family $\mathcal{F}'$ of $t$ subsets
	such that for every $A \in \mathcal{F}$, there is an $A_j' \in \mathcal{F}'$ with $||A \cap A_j'|-|A \cap ([n]\setminus A_j')|| \leq  \sqrt{\frac{3n\ln (2m)}{t}}$.
	\qed
\end{proof} 

Note that if $i \geq \sqrt{4.2n+1}$ and $|\mathcal{F}| = O(n^c)$, $c \in \mathbb{N}$,
a $D$-secting family for $\mathcal{F}$ of cardinality $O(\log n)$ can be computed as discussed above.
Note that this yields $D$-secting families 
of size much smaller than that guaranteed by Corollary \ref{cor:sqrtn}
for $\mathcal{F}$ provided $|\mathcal{F}|$ is polynomial in $n$.

\section{Bounds for $\beta_i(n)$}

In Section \ref{sec:Knn}, we established tight bounds for $\beta_D(n)$ when $D=[\pm i]$.
In this section, we study $\beta_D(n)$, when $D$ is a singleton set, i.e., $D=\{i\}$.

\subsection{Tight bounds for $\beta_1(n)$}
\label{subsec:1dist}
\begin{theorem}\label{thm:Knn21}
	$\beta_1(n) = \lceil\frac{n}{2}\rceil$, $n \in \mathbb{N}$.
\end{theorem}

\begin{proof}
	As mentioned in Section \ref{sec:intro},  when $D=\{1\}$, the family $\mathcal{F}$ 
	should consist of all the odd subsets of $[n]$.
	Let $\mathcal{R}$ be a minimum sized set of $\{-1,+1\}^n$ vectors such that
	for every odd set $A_o \in \mathcal{F}$, there exists a vector $R \in \mathcal{R}$ such that
	$\left\langle A_o, R \right\rangle-1 =0$.
	Consider the polynomial $M$ on $X=(x_1,\ldots,x_n)$.
	\begin{align}\label{eq:poly3}
	M(X)= \prod_{R \in \mathcal{R}} (\left\langle X,R \right\rangle -1)^2
	\end{align}
	Note that if $N'(Y)$ is obtained from $M(X)$ after domain conversion and multilinearization, $N'$ weakly represents the parity function.
	Using Lemma \ref{lemma:weak},
	$deg(M(X))=2|\mathcal{R}| \geq deg(N'(Y)) \geq n$ and therefore $|\mathcal{R}| \geq \lceil\frac{n}{2}\rceil$.
	In what follows, we demonstrate a construction of a family $\mathcal{F}'$ of cardinality $\lceil\frac{n}{2}\rceil$ such that for every odd subset $A \in \mathcal{F}$, there exists some $A' \in \mathcal{F}'$ with $|A \cap A'|-|A \cap ([n]\setminus A')|=1$.
	
	Consider the family $\mathcal{F}$
	consisting of all the odd subsets of $[n]$.
	Consider the case when $n$ is even; the odd case is similar except the ceilings in the final expression.
	Note that if $n \leq 2$, we can choose $\mathcal{F}'=\{\{1,2\}\}$ to get the desired intersection property.
	So, we consider the case when $n \geq 4$.
	Let $B_1=\{1,2,\ldots,\frac{n}{2}+1\}$.
	$B_2$ is obtained from $B_1$ by swapping $\{\frac{n}{2}+1\}$ with $\{\frac{n}{2}+2\}$, i.e.,  $B_2=\{1,2,\ldots,\frac{n}{2},\frac{n}{2}+2\}$.
	In general, $B_{j+1}$ is obtained from $B_{j}$ by replacing the point $\frac{n}{2}-j+2$ with 
	$\frac{n}{2}+j+1$. 
	We stop the process at $B_{\frac{n}{2}}=\{1,2,n,n-1,\ldots,\frac{n}{2}+2\}$. 
	Let $\mathcal{F}'=\{B_1,\ldots,B_\frac{n}{2}\} $.
	\begin{claimm}\label{claim:2}
		(i) For any odd subset $A_o \subseteq \{3,\ldots,n\}$, there exists some  $B_j$ and $B_l$ in $\mathcal{F}'$ such that
		$|A\cap B_j|=\lceil \frac{|A|}{2}\rceil$, and $|A\cap B_l|=\lfloor \frac{|A|}{2}\rfloor$, and
		(ii) For any even subset $A_e \subseteq \{3,\ldots,n\}$, there exists some  $B_j$ in $\mathcal{F}'$ such that
		$|A\cap B_j|= \frac{|A|}{2}$.
	\end{claimm}
		To see the correctness of the claim,
		consider an arbitrary set $A$, $A \subseteq \{3,\ldots,n\}$, such that  $|A \cap B_1| -  |A \cap ([n] \setminus B_1)|=d$, for some $d \in \mathbb{N}\setminus {0}$.
		Then, it follows from the construction that $|A \cap B_{\frac{n}{2}}| -  |A \cap ([n] \setminus B_{\frac{n}{2}})|=-d$.
		Observe that for any $j$, $1 \leq j \leq \frac{n}{2}-1$, the difference between 
		$|A \cap B_{j+1}| -  |A \cap ([n] \setminus B_{j+1})|$
		and $|A \cap B_{j}| -  |A \cap ([n] \setminus B_{j})|$ is either -2, 0 or 2.
		So, the claim follows.

	Now, to complete the proof, we need to consider the following exhaustive case for an odd subset $A_o$.
	\begin{enumerate}
		\item $A_o \subseteq \{3,\ldots,n\}$: $A_o$ has the desired intersection property using Claim \ref{claim:2}.
		
		\item $|A_o \cap \{3,\ldots,n\}|= |A_o|-1$: Using Claim \ref{claim:2}, 
		there exists some  $B_j$ in $\mathcal{F}'$ such that
		the even subset $A_o \cap \{3,\ldots,n\}$ is bisected by $B_j$.
		Clearly, $|A_o\cap B_j|=\lceil \frac{|A_o|}{2}\rceil$.
		
		\item $|A_o \cap \{3,\ldots,n\}|= |A_o|-2$: In this case, $ \{1,2\} \subset A_o$.
		From Claim \ref{claim:2}, 
		there exists some  $B_j$ in $\mathcal{F}'$ such that
		$|A_o' \cap B_j|= \lfloor \frac{|A_o'|}{2}\rfloor$, where  $A_o'=A_o \cap \{3,\ldots,n\}$.
		Then, $|A_o\cap B_j|=\lceil \frac{|A_o|}{2}\rceil$.
	\end{enumerate}
	
	This establishes that  $\beta_1(n)$ is at most  $\lceil \frac{n}{2} \rceil$ and completes the proof of Theorem \ref{thm:Knn21}.
	\qed
\end{proof}

\subsection{Bounds for $\beta_i(n)$, $i\geq 2$}
\label{subsec:idist}
In the following section, we extend the notion of $\beta_1(n)$ to arbitrary values of $i$.
Note that when $i=0$, $\beta_0(n)=\beta_{[\pm 1]}(n)= \lceil \frac{n}{2} \rceil$ (see Theorem \ref{thm:Knn}).
The case when $i=1$ is resolved by Theorem \ref{thm:Knn21}.
We assume that $i \geq 2$ in the remainder of the section.

\subsubsection{Proof of Theorem \ref{thm:iknn}}
\begin{statement}
	$\frac{n-i+1}{2}\leq \beta_i(n) \leq n-i+1$, $n \in \mathbb{N}$, $i \in [n]$.
\end{statement}

\begin{proof}
	
	Let $\mathcal{F}$ consist of all subsets of $[n]$ such that $A \in \mathcal{F}$ if and only if $|A| \cong i \mod 2$
	and $|A| \geq i$.
	Let $\mathcal{F}'=\{B_1=[i],B_2=B_1 \cup \{i+1\},\ldots,B_{n-i+1}=B_{n-i} \cup \{n\}\}$.
	Observe that $\mathcal{F}'$ is indeed an $i$-secting family for $\mathcal{F}$.
	Therefore, $\beta_i(n) \leq n-i+1$.
	In what follows, we prove the lower bound for $\beta_i(n)$
	assuming $i$ to be an  even integer greater than 1. The case for odd $i$ can be treated analogously.
	
	We invoke the notion of weak representation of the parity function to establish a lower bound.
	Let $\mathcal{F}$ denote the $2^n-1$ non-empty subsets of $[n]$.
	Let $\mathcal{F}'$ be a minimum cardinality $[\pm i]$-secting family for $\mathcal{F}$.
	Let $\mathcal{R}$ be the set of incidence vectors of sets in $\mathcal{F}'$, where each vector $R$ in $\mathcal{R}$ is
	an element of $\{-1,+1\}^n$.
	So, for any even subset $A_e \subseteq [n]$ with $|A_e| \geq i$, there exists a vector $R \in \mathcal{R}$ such that
	$\left\langle X_{A_e}, R \right\rangle-i =0$, where $X_{A_e}$ is the  0-1 incidence vector of $A_e$.
	We define the polynomials $P$, $M$ and $F$ on $X=(x_1,\ldots,x_n)$ as follows.
	\begin{align}
	M(X)=& \prod_{R \in \mathcal{R}} (\left\langle X,R \right\rangle -i)^2. \label{eq:mx}\\
	F(X)=& \sum_{S \in \binom{[n]}{i-1}}  \prod_{j \in S} x_j. \nonumber\\
	P(X)=&M(X)F(X).
	\end{align}
	Observe that (i) $P(X)$ evaluates to zero when $X=X_{A}$, for all subsets $A$ of size at most $i-2$ (since $F(X)$ vanishes for these subsets),
	(ii) $P(X)$ evaluates to zero when $X=X_{A_e}$, for all even subsets $A_e$ of size at least $i$ (since $M(X)$ vanishes for these subsets), and,
	(iii) $P(X)$ is strictly positive when $X=X_{A_o}$, for all odd subsets $A_o$ of size at least $i-1$.
	Consider the polynomial $Q$ on $Y=(y_1,\ldots,y_n)$, where each $y_j \in [\pm 1]$.
	\begin{align}\label{eq:poly5}
	Q\left(y_1,\ldots,y_n \right)=-P\left(x_1,\ldots,x_n\right)
	\end{align}
	where $x_j=\frac{1-y_j}{2}$, $1 \leq j \leq n$.
	Let $Q'(Y)$ be the multilinear polynomial obtained from $Q(Y)$ by replacing each occurrence of a $y_j^2$ by 1, repeatedly.
	Note that 
	(i) $Q'(Y)$ evaluates to zero for even subsets of $[n]$, and 
	(ii) if $Q'(Y)$ is non-zero on some odd subset $Y$, then $sign(Q'(Y))= sign(parity(Y))$.
	Therefore, $Q'(Y)$ weakly represents the parity function.
	From Lemma \ref{lemma:weak}, $Q'(Y)$ has degree at least $n$, 
	and $deg(P(X))=(i-1)+2|\mathcal{R}| \geq deg(Q'(Y)) \geq n$. So, $|\mathcal{R}| \geq \frac{n-i+1}{2}$.
	\qed

\end{proof}

\section{Bisecting $k$-uniform families}
\label{sec:k-uniform}

In this section, we discuss the problem of bisection for $k$-uniform families.
We focus on establishing bounds for $\beta_D(n,k)$ when $D=[\pm 1]$.

\subsection{Some observations for $\beta_{[\pm 1]}(n,k)$}
\begin{observation}\label{obs:2} 
	Let $n$ be an even integer and $ \mathcal{F}'$ be an optimal bisecting family for a family $\mathcal{F}=\binom{[n]}{k}$ 
	such that each subset $A' \in \mathcal{F}'$ has cardinality $\frac{n}{2}$. Then, $\beta_{[\pm 1]}(n,n-k) \leq \beta_{[\pm 1]}(n,k)$ 
\end{observation}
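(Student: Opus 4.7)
The plan is to argue that the very same family $\mathcal{F}'$ that optimally bisects $\binom{[n]}{k}$ must also bisect $\binom{[n]}{n-k}$, provided every member of $\mathcal{F}'$ has size exactly $n/2$. The natural correspondence is complementation in $[n]$: for each $B \in \binom{[n]}{n-k}$, set $A = [n] \setminus B \in \binom{[n]}{k}$, pick some $A' \in \mathcal{F}'$ that bisects $A$, and show that this same $A'$ bisects $B$.

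The key is a short identity. Fix any $A' \subseteq [n]$ with $|A'| = n/2$ and let $B = [n] \setminus A$. Because $|A \cap A'| + |B \cap A'| = |A'| = n/2$ and similarly $|A \cap ([n] \setminus A')| + |B \cap ([n] \setminus A')| = |[n] \setminus A'| = n/2$, we obtain
\begin{align*}
|B \cap A'| - |B \cap ([n] \setminus A')|
&= \bigl(n/2 - |A \cap A'|\bigr) - \bigl(n/2 - |A \cap ([n] \setminus A')|\bigr) \\
&= -\bigl(|A \cap A'| - |A \cap ([n] \setminus A')|\bigr).
\end{align*}
Thus the discrepancy of $B$ against $A'$ is the negative of the discrepancy of $A$ against $A'$. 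Since the set $\{-1,0,1\}$ is symmetric about $0$, if $A$ is bisected by $A'$ then so is $B$.

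Concluding, for every $B \in \binom{[n]}{n-k}$ the set $A = [n] \setminus B \in \binom{[n]}{k}$ has some $A' \in \mathcal{F}'$ bisecting it, and by the identity above the same $A'$ bisects $B$. Hence $\mathcal{F}'$ is a bisecting family for $\binom{[n]}{n-k}$, yielding $\beta_{[\pm 1]}(n, n-k) \leq |\mathcal{F}'| = \beta_{[\pm 1]}(n, k)$.

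There is no substantial obstacle here; the only delicate point is recognising that the hypothesis $|A'| = n/2$ for every $A' \in \mathcal{F}'$ is precisely what makes the discrepancy flip sign under complementation. Without this balance condition, $|B \cap A'| - |B \cap ([n]\setminus A')|$ would be shifted by $2|A'| - n$ and the conclusion could fail.
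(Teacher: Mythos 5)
Your proof is correct and follows the same approach as the paper, which merely asserts the conclusion ("it is not hard to see"); you supply the complementation identity and the sign-flip argument that the paper leaves implicit.
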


\begin{proof}
	It is not hard to see that the bisecting family $\mathcal{F}'$ for $\mathcal{F}$ is also a bisecting family for 
	$\overline{\mathcal{F}}=\binom{[n]}{n-k}$ when $n$ is even and each subset in $\mathcal{F}'$ is a part of an equal-sized bipartition of $n$. 
	\qed
\end{proof}

From Corollary \ref{prop:graph}, we know that $\beta_{[\pm 1]}(n,2)=\lceil \log n \rceil$.
Moreover, when $n$ is of the form $2^t$, for some $t  \in \mathbb{N}$,
we can obtain a bisecting family
$\mathcal{F}'=\{A_1,\ldots,A_{\log n}\}$ for the family $\mathcal{F}=\binom{[n]}{2}$
in the following way.
(i) For $j \in [n]$, obtain the $\log n$ bit binary code equivalent to $j-1$ and assign it to $j$.
(ii) Elements with $l$-th bit as 1 form the set $A_l$. 
Using Corollary \ref{prop:graph}, $\mathcal{F}'$ is an optimal bisecting family for $\mathcal{F}$, and 
$|A_l|=\frac{n}{2}$, for all $A_l \in \mathcal{F}'$. 
Using Observation \ref{obs:2}, it follows that $\beta_{[\pm 1]}(n,n-2) \leq  \log n$, when $n$ is a power of 2. 
However, when the difference between $n$ and $k$ is a small constant, we can achieve much better bounds for $\beta_{[\pm 1]}(n,k)$ as follows.

\subsection*{Proof of Theorem \ref{thm:constup}}
\begin{statement}
Let $\mathcal{F}=\binom{[n]}{k} \cup \binom{[n]}{k+1}\ldots \cup \binom{[n]}{n}$.
Then, $ \frac{n-k+1}{2} \leq \beta_{[\pm 1]}(\mathcal{F}) \leq \min \{\frac{n}{2}, n-k+1 \}.$
\end{statement}
\begin{proof}
The upper bound of $\frac{n}{2}$ follows from Lemma \ref{lemma:Knn1}.
Let $x=n-k$.
We obtain a bisecting family for $\mathcal{F}$ of cardinality $x+1$ in the following way.
Let $S$ and $T$ denote two disjoint $\lceil \frac{k}{2} \rceil $ and $\lfloor \frac{k}{2} \rfloor$ elements subset of $[n]$,
respectively. Let $c_1,\ldots,c_x$ denote the remaining 
elements of $[n]$.
Let $S_0=S$, and for any $j \in [x]$, $S_j=S_{j-1} \cup \{c_j\}$. Let 
$\mathcal{F}'=\{S_0,\ldots,S_x\}$.
We claim that
$\mathcal{F}'$ is a bisecting family for a $\mathcal{F}$.
For any set $A$ of cardinality $k'$, $k \leq k' \leq n$, that is not bisected by $S_0$, $|A \cap S_0| < \frac{k'}{2}$ and $|A \cap S_x| \geq \frac{k'}{2}$. The upper bound follows from the observation that $|A \cap S_{j+1}|$ differs from $|A \cap S_j|$ by at most 1.

The proof of the lower bound  $ \frac{n-k+1}{2}$ for $\beta_{[\pm 1]}(\mathcal{F})$ is 
in the same spirit  as the proof of the lower bound of Theorem \ref{thm:iknn}; we give the proof for completeness.
We assume that $k \geq 2$ and is even; the case when $k$ is odd is analogous.
Let $\mathcal{F}'$ be a minimum cardinality $[\pm 1]$-secting family for $\mathcal{F}$.
Let $\mathcal{R}$ be the set of incidence vectors of sets in $\mathcal{F}'$, where each vector $R$ in $\mathcal{R}$ is
an element of $\{-1,+1\}^n$.
We define the polynomials $P$, $M$ and $F$ on $X=(x_1,\ldots,x_n)$ as follows.
\begin{align}
M(X)=& \prod_{R \in \mathcal{R}} (\left\langle X,R \right\rangle)^2 \text{ (note the difference from Equation \ref{eq:mx})}.\\
F(X)=& \sum_{S \in \binom{[n]}{k-1}}  \prod_{j \in S} x_j.\\
P(X)=&M(X)F(X).
\end{align}
Observe that (i) $P(X)$ evaluates to zero when $X=X_{A}$, for all subsets $A$ of size at most $k-2$ (since $F(X)$ vanishes for these subsets),
(ii) $P(X)$ evaluates to zero when $X=X_{A_e}$, for all even subsets $A_e$ of size at least $k$ (since $M(X)$ vanishes for these subsets), and,
(iii) $P(X)$ is strictly positive when $X=X_{A_o}$, for all odd subsets $A_o$ of size at least $k-1$.
Note that if $Q'(Y)$ is obtained from $P(X)$ after domain conversion and multilinearization, $Q'(Y)$ weakly represents the parity function.
From Lemma \ref{lemma:weak}, $Q'(Y)$ has degree at least $n$, 
and $deg(P(X))=(k-1)+2|\mathcal{R}| \geq deg(Q'(Y)) \geq n$. So, $|\mathcal{R}| \geq \frac{n-k+1}{2}$.
\qed
\end{proof}

Note that using Theorem \ref{thm:constup} for $k=n-2$, we get, $\beta_{[\pm 1]}(n,n-2) \leq 3$.
This is surprising since 
(i) $\mathcal{F}=\binom{[n]}{n-2}$ has the same number of subsets as $\overline{\mathcal{F}}=\binom{[n]}{2}$,
(ii) the maximum number of sets of $\mathcal{F}$ and $\overline{\mathcal{F}}$ that can be bisected by a single set $A' \in \mathcal{F}'$
is $(\frac{n}{2})^2$, and
(iii) $\beta_{0}(n,2)=\lceil \log n \rceil$.

\begin{proposition}
	\label{prop:n2}
	$\beta_{[\pm 1]}(n,n-2) = 3$, for every even integer $n$ greater than 4.
\end{proposition}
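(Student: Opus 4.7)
The upper bound $\beta_{[\pm 1]}(n,n-2)\le 3$ is immediate from Lemma~\ref{lemma:kfull} applied with $k=n-2$, since $\binom{[n]}{n-2}$ is a subfamily of $\binom{[n]}{n-2}\cup\binom{[n]}{n-1}\cup\binom{[n]}{n}$ and $\beta_{[\pm 1]}$ is monotone in the family. So the entire work lies in establishing the matching lower bound: no family $\mathcal{F}'=\{A_1',A_2'\}$ of just two subsets of $[n]$ can bisect every $(n-2)$-sized set when $n\ge 6$ is even. I would assume such an $\mathcal{F}'$ exists and derive a contradiction.

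The first step is to translate bisection of a set $A=[n]\setminus\{i,j\}$ into a condition on the pair $\{i,j\}$. Since $|A|=n-2$ is even, ``bisected'' forces $|A\cap A_\ell'|=(n-2)/2$ exactly, and the identity $|A\cap A_\ell'|=|A_\ell'|-|A_\ell'\cap\{i,j\}|$ forces $|A_\ell'|\in\{n/2-1,\ n/2,\ n/2+1\}$. Using Observation~\ref{obs:1} to replace any $A_\ell'$ by its complement if needed, I may assume $|A_\ell'|\in\{n/2-1,\ n/2\}$. The reformulated conditions are: if $|A_\ell'|=n/2$, then $A_\ell'$ bisects $[n]\setminus\{i,j\}$ iff $\{i,j\}$ straddles the bipartition $(A_\ell',[n]\setminus A_\ell')$; if $|A_\ell'|=n/2-1$, then $A_\ell'$ bisects $[n]\setminus\{i,j\}$ iff $\{i,j\}\subseteq B_\ell:=[n]\setminus A_\ell'$, a set of size $n/2+1$. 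So the problem reduces to covering every unordered pair on $[n]$ by at most two such ``patches''.

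Next, I would split into three cases by the cardinality pair $(|A_1'|,|A_2'|)$. For $(n/2,n/2)$, the four cells $V_{st}:=\{v:\mathbf{1}[v\in A_1']=s,\ \mathbf{1}[v\in A_2']=t\}$ partition $[n]$, and a pair fails to be covered iff both endpoints lie in the same cell; demanding every cell to have size at most one forces $n\le 4$, contradicting $n>4$. For $(n/2-1,n/2-1)$, every covered pair lies in $B_1$ or $B_2$, so $B_1\cup B_2=[n]$ and $|B_1\cap B_2|=|B_1|+|B_2|-n=2$; for $n\ge 6$ both $B_1\setminus B_2$ and $B_2\setminus B_1$ are nonempty, and any pair with one endpoint in each is in neither $B_\ell$, a contradiction. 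For the mixed case $(n/2,n/2-1)$, every pair inside $A_1'$ and every pair inside $[n]\setminus A_1'$ must lie in $B_2$, which (since both $|A_1'|$ and $|[n]\setminus A_1'|$ equal $n/2\ge 3$) forces $A_1'\cup([n]\setminus A_1')=[n]\subseteq B_2$, contradicting $|B_2|=n/2+1<n$.

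The argument is elementary once the reduction is made; there is no deep obstacle. The one step that requires a little care is the translation from the arithmetic bisection condition on $|A\cap A_\ell'|$ to the clean geometric condition on the pair $\{i,j\}$ relative to each $A_\ell'$, since this is what forces the three-way cardinality split in the first place. After that, each case closes quickly by pigeonhole and the hypothesis $n>4$.
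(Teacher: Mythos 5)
Your proof is correct. It follows the same high-level skeleton as the paper's: upper bound from Lemma~\ref{lemma:kfull} plus monotonicity, lower bound by supposing a bisecting family $\{A_1',A_2'\}$ of two sets, restricting cardinalities to $\{\frac{n}{2}-1,\frac{n}{2},\frac{n}{2}+1\}$, normalizing via Observation~\ref{obs:1}, and doing a case analysis on the two cardinalities. The execution differs in a useful way: you first translate ``$A_\ell'$ bisects $[n]\setminus\{i,j\}$'' into a purely combinatorial condition on the \emph{pair} $\{i,j\}$ --- straddling $A_\ell'$ when $|A_\ell'|=\frac{n}{2}$, or lying inside $B_\ell=[n]\setminus A_\ell'$ when $|A_\ell'|=\frac{n}{2}-1$ --- which turns the question into covering all of $\binom{[n]}{2}$ by two such ``patches,'' after which each cardinality case closes by a short pigeonhole. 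The paper instead dispatches the $(\frac{n}{2},\frac{n}{2})$ case by invoking Observation~\ref{obs:2} together with Corollary~\ref{prop:graph}, and handles the remaining cases by ad-hoc arguments about $A^1=A_1\cap A_2$ and $A^2=A_1\setminus A_2$. Your version is more self-contained (it does not need Corollary~\ref{prop:graph}) and a bit more robust at the boundary: in the paper's case $|A_1|=\frac{n}{2}-1$, $|A_2|=\frac{n}{2}$, the assertion ``at least one of $A^1$, $A^2$ has size at least $2$'' can fail when $n=6$ (where $|A_1|=2$ may split as $1+1$), whereas your cell-counting and $B_2$-containment arguments only need $\frac{n}{2}\geq 2$ and so go through uniformly for all even $n\geq 6$.
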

\begin{proof}
	We only need to show that $\beta_{[\pm 1]}(n,n-2) > 2$.
	Note that since the hyperedges are of cardinality $n-2$, every set in  an optimal bisecting family $\mathcal{F}'$ is of cardinality
	$\frac{n}{2}-1$, $\frac{n}{2}$, or $\frac{n}{2}+1$.
	Consider an optimal bisecting family $\mathcal{F}'=\{A_1,A_2\}$ of cardinality 2 for $\mathcal{F}=\binom{[n]}{n-2}$. 
	Since $\beta_{[\pm 1]}(n,n-2) \leq 3$, any optimal bisecting family $\mathcal{F}'$ for $\mathcal{F}$ must contain at least one set of size other than $\frac{n}{2}$. Otherwise, using Observation \ref{obs:2}, $\mathcal{F}'$ is a bisecting family of  cardinality less than $\log n$ for $\binom{[n]}{2}$, a contradiction to Corollary \ref{prop:graph}.
	Without loss of generality, 
	assume that $|A_1| \neq \frac{n}{2}$.
	Using Observation \ref{obs:1}, we can also assume that  $|A_1| = \frac{n}{2}-1$.
	The rest of the proof is an exhaustive case analysis based on the cardinality of $A_2$.
	Let $ A^1 = A_1 \cap A_2$ and $A^2=A_1 \setminus A_2$.
	\begin{enumerate}
		\item $|A_2|=\frac{n}{2}$.  At least one of $A^1$ or $A^2$ is of size at least 2.
		The $(n-2)$-sized subset missing 2 elements of $[n]$ both from either $A^1$ or $A^2$ is not bisected by $\mathcal{F}'$.
		\item $|A_2|=\frac{n}{2}+1$.  If $|A^2| \geq 2$, 	the $(n-2)$-sized subset missing 2 elements both from $A^2$ is not bisected by $\mathcal{F}'$. So, $|A^2| \leq  1$.  If $A^2=\{y\}$,  then an $(n-2)$-sized subset missing $y$ and one element from $A^1$ is not bisected by $\mathcal{F}'$. If $A^2= \emptyset$, then any $(n-2)$-sized subset missing one element each from $A_1$ and $[n] \setminus A_2$ is not bisected by $\mathcal{F}'$.
		\item $|A_2|=\frac{n}{2}-1$. Using Observation \ref{obs:1}, this case is identical to Case 2.
	\end{enumerate} 
	\qed 
\end{proof}


\subsection{Proof of Theorem \ref{thm:klower}}

Note that the lower bound of  $\Omega(\sqrt{\frac{k (n-k)}{n}})$ for $\beta_{[\pm 1]}(n,k)$ is given by  Observation \ref{obs:ineq:2}.
However, when $k$ is a constant, Observation \ref{obs:ineq:2} asserts only an $\Omega(\sqrt{k})$ lower bound on $\beta_{[\pm 1]}(n,k)$.
An improved lower bound on $\beta_{[\pm 1]}(n,k)$ for constant $k$ given by Theorem \ref{thm:klower} is proven below. 

\begin{statement}
	\begin{align*}
	\beta_{[\pm 1]}(n,k) & \geq \begin{cases}
	\log (n-k+2)\text{, when $k$ is even and $\frac{k}{2}$  is odd,}\\
	\lceil (\log \lceil \frac{n}{\lceil\frac{k}{2}\rceil} \rceil)\rceil \text{, for any $k \geq 2$}.
	\end{cases}
	\end{align*}
\end{statement}

\begin{proof}
	We prove the first lower bound given in Theorem \ref{thm:klower} under the assumption that $k$ is even and $\frac{k}{2}$  is odd.
	Let $\mathcal{F}'=\{A_1',\ldots,A_t'\}$ be a bisecting family for the family $\mathcal{F}=\binom{[n]}{k}$.
	For every $A_j' \in \mathcal{F}'$, let $\mathcal{F}_j$ be the collection of $k$-sized sets that are bisected by $A_j'$.
	We estimate a lower bound for $t$.
	We associate a graph $G(\mathcal{F})$ with the collection $\mathcal{F}$ of $k$-sized sets 	in the following way:
	\begin{align*}
	V(G(\mathcal{F}))&=\{ S \in \binom{[n]}{\frac{k}{2}}:S \subseteq A\text{, }A \in \mathcal{F}\} \\
	E(G(\mathcal{F}))&=\{\{S_1,S_2\}: S_1 \cap S_2 = \emptyset, S_1,S_2 \in V(G(\mathcal{F}))\}.
	\end{align*}
	Observe that $G(\mathcal{F})$ is the Kneser graph $KG(n,\frac{k}{2})$ (for definitions and results related to Kneser graphs, see \cite{bollobas2004extremal,aigner2010proofs}).
	For every $k$-sized subset $A \in \mathcal{F}$,
	there are $\binom{k}{\frac{k}{2}}$ edges in $E(G(\mathcal{F}))$: an edge between any two disjoint 
	$\frac{k}{2}$ sets. 
	From the definition of $\mathcal{F}_1,\ldots, \mathcal{F}_t$, $\displaystyle\cup_{j=1}^t G(\mathcal{F}_j)=G(\mathcal{F})$.
	
	\begin{claimm}
		Each $G(\mathcal{F}_j)$ is a bipartite graph.
	\end{claimm}
	Let $A \in \mathcal{F}_j$. Consider a fixed $\frac{k}{2}$ sized subset $S$ of $A$.
	If $|S \cap A_j'| > \lfloor \frac{k}{4}\rfloor$, $S$ is placed in the first partite set of $G(\mathcal{F}_j)$;
	otherwise $S$ is placed in the second partite set of $G(\mathcal{F}_j)$.
	Note that since $\frac{k}{2}$ is odd, $|S \cap A_j'|$ can never be equal to   $|S \cap ([n] \setminus A_j')|$.
	It is now easy to see that there is no edge inside the first or second partite set of $G(\mathcal{F}_j)$.
	
	$G(\mathcal{F}_1),\ldots,G(\mathcal{F}_t)$ are bipartite graphs whose union covers $G(\mathcal{F})$. Since  $G(\mathcal{F})$ is
	is the Kneser graph $KG(n,\frac{k}{2})$, its chromatic number is $n-k+2$ (see \cite{LOVASZ1978319,aigner2010proofs}). So, using Proposition  \ref{lemma:bc}, we get, $t \geq \lceil \log (n-k+2) \rceil$
	\footnote{Note that Proposition \ref{lemma:bc} does not guarantee equality since the  $\lceil \log (n-k+2) \rceil$ bipartite graphs
		that cover $G(\mathcal{F})$ as per Proposition \ref{lemma:bc} may not correspond to valid $\mathcal{F}_j$'s.}.
	That is, $\beta_{[\pm 1]}(n,k) \geq \lceil \log (n-k+2) \rceil$, when $k$ is even and $\frac{k}{2}$ is odd. This concludes the proof of the first lower bound given by Theorem \ref{thm:klower}.
	
	To prove the second lower bound of Theorem \ref{thm:klower},
	consider a bisecting family $\mathcal{F}'=\{A_1',\ldots, A_t'\}$ of $\mathcal{F}=\binom{[n]}{k}$.
	Observe that for every $\lceil\frac{k}{2}\rceil+1$-sized set $S \subseteq [n]$, there exists an $A_j' \in \mathcal{F}'$ such that
	$S \cap A_j' \neq \emptyset$ and $S \cap ([n]\setminus A_j') \neq \emptyset$. 
	For every $A_j' \in \mathcal{F}'$, let $\mathcal{F}_j$ be the collection of $\lceil\frac{k}{2}\rceil+1$-sized sets 
	that has a non-empty intersection with both $A_j'$ and $[n] \setminus A_j'$.
	Observe that 
	\begin{align} \label{equal:1}
	\bigcup\limits_{j=1}^{t} \mathcal{F}_j=\binom{[n]}{\lceil\frac{k}{2}\rceil+1}.
	\end{align}
	Construct hypergraphs $G_1,\ldots, G_{t}$, where $V(G_j)=[n]$ and $E(G_j)=\mathcal{F}_j$.
	To each point $v \in [n]$, assign an ${t}$ length 0-1 bit vector: $j$th bit is 1 if and only if $v \in A_j$.
	Color the points in $[n]$ with the decimal equivalent of its bit vector. 
	Let $f:[n] \rightarrow \{0,1,\ldots,2^{t}-1\}$ denote this coloring.
	We show that none of the $\binom{[n]}{\lceil\frac{k}{2}\rceil+1}$
	sets remain monochromatic under $f$.
	Assume for the sake of contradiction that $S \in \binom{[n]}{\lceil\frac{k}{2}\rceil+1}$
	is monochromatic under $f$. From Equation \ref{equal:1}, there exists an $\mathcal{F}_j$ such that 
	$S \in \mathcal{F}_j$.  From the definition of $\mathcal{F}_j$, $S$ has non-empty intersection with both $A_j'$ and $[n] \setminus A_j'$.
	Therefore, the $j$th bits of the ${t}$ length 0-1 bit vectors of all the points in $S$ cannot be the same.
	Therefore, $S$ contains at least two points of different color under $f$, i.e., $S$ is not monochromatic.
	It is well known that the chromatic number of $\binom{[n]}{\lceil\frac{k}{2}\rceil+1}$, $\chi(\binom{[n]}{\lceil\frac{k}{2}\rceil+1})$, is $\lceil \frac{n}{\lceil\frac{k}{2}\rceil} \rceil$.
	Since $f$ 
	uses $2^{t}$ colors,
	we have, $2^t \geq \lceil \frac{n}{\lceil\frac{k}{2}\rceil} \rceil $
	Therefore, $\beta_{[\pm 1]}(n,k) = |\mathcal{F}'| =t \geq \lceil (\log \lceil \frac{n}{\lceil\frac{k}{2}\rceil} \rceil)\rceil$.
	
	This completes the proof of Theorem \ref{thm:klower}.
	\qed
\end{proof}

\subsection{Proof of Theorem \ref{thm:n_k_linear}}

We know that $\beta_{[\pm 1]}(n)=\lceil \frac{n}{2}\rceil$ (see Theorem \ref{thm:Knn}).
The  number of $\frac{n}{2}$-sized subsets of $[n]$ that can be bisected by a single subset $A' \subseteq [n]$ is at most $2(\binom{\frac{n}{2}}{\frac{n}{4}})^2$.
This gives a trivial lower bound of $\Omega(\sqrt n)$ for $\beta_{[\pm 1]}(n,\frac{n}{2})$.
In this section, we prove a stronger result using a theorem of Keevash and Long \cite{keevash2017frankl} which is an improvement over a theorem of Frankl and R\"{o}dl \cite{MR871675}. 
Given $q \in \mathbb{N}$, a set $\mathcal{C}$ is called a $q$-ary code if $\mathcal{C} \subseteq [q]^n$, for $q \geq 2$.
For any $x, y \in [q]^n$, the \emph{Hamming distance} between $x$ and $y$, where $x=(x_1,\ldots,x_n)$ and $y=(y_1,\ldots,y_n)$, denoted by $d_H(x,y)$, is
$|\{i \in [n]:x_i \neq y_i|$. For any code $\mathcal{C}$, let $d(\mathcal{C})$ be the set of 
all the Hamming distances allowed for any $x,y \in \mathcal{C}$.
A code is called $d$-\emph{avoiding} if $d \not\in d(\mathcal{C})$.
We have the following upper bound on the cardinality of a $d$-\emph{avoiding} code $\mathcal{C}$ as given in 
\cite{keevash2017frankl}.

\begin{theorem}\cite{keevash2017frankl}\label{thm:keevesh}
	Let $\mathcal{C} \subseteq [q]^n$ and let $\epsilon$ satisfy $0 < \epsilon < \frac{1}{2}$.
	Suppose that $\epsilon n < d < (1-\epsilon)n$ and $d$ is even if $q =2$. If $d \not\in d(\mathcal{C})$,
	then $|\mathcal{C}| \leq q^{(1-\delta)n}$, for some positive constant $\delta=\delta(\epsilon)$.
\end{theorem}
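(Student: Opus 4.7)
My plan is to prove Theorem~\ref{thm:keevesh} via the Delsarte/Hoffman spectral bound applied to the Cayley graph on the Hamming scheme $H(n,q)$. The $d$-avoidance hypothesis says exactly that $\mathcal{C}$ is an independent set in this graph, so a suitable eigenvalue estimate translates directly into an upper bound on $|\mathcal{C}|$. Concretely, let $G_d$ be the graph on vertex set $[q]^n$, viewed as the additive group $\mathbb{Z}_q^n$, whose edges are pairs at Hamming distance exactly $d$. Then $G_d$ is vertex-transitive, regular of degree $D = (q-1)^d\binom{n}{d}$, and has $q^n$ vertices; by hypothesis $\mathcal{C}$ is an independent set, so $|\mathcal{C}| \le \alpha(G_d)$.

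Next I would diagonalize $G_d$ using the characters of $\mathbb{Z}_q^n$, which form a simultaneous eigenbasis for every Cayley graph on this group. A standard character computation yields that the distinct eigenvalues of $G_d$ are the $q$-ary Krawtchouk values
\[
K_d^{(q,n)}(j) \;=\; \sum_{\ell=0}^{d} (-1)^{\ell}(q-1)^{d-\ell}\binom{j}{\ell}\binom{n-j}{d-\ell}, \qquad j = 0,1,\ldots,n,
\]
with multiplicity $\binom{n}{j}(q-1)^j$. Writing $\mu = -\min_j K_d^{(q,n)}(j)$ and noting $K_d^{(q,n)}(0) = D$, Hoffman's bound gives $\alpha(G_d) \le q^n \cdot \mu/(D+\mu)$. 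Hence it suffices to establish $\mu/D \le q^{-\delta n}$ for some $\delta = \delta(\epsilon) > 0$; this would yield $|\mathcal{C}| \le q^{(1-\delta)n}$, as required.

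The technical heart of the argument is the Krawtchouk estimate. I would work from the generating-function representation $K_d^{(q,n)}(j) = [z^d]\,(1 - z)^j\bigl(1 + (q-1)z\bigr)^{n-j}$ and carry out a saddle-point analysis for $d$ in the bulk range $\epsilon n < d < (1-\epsilon)n$, evaluated at a test index $j^*$ of the form $\lfloor \theta^\ast n\rfloor$ for a carefully chosen $\theta^\ast$ near $(q-1)/q$; an interior stationary point of the resulting exponent supplies the required factor $q^{-\delta n}$ relative to $D$. The parity assumption ``$d$ even if $q = 2$'' enters precisely here: when $q = 2$ and $d$ is odd, $G_d$ is bipartite and $K_d^{(2,n)}(n) = -D$, forcing the Hoffman ratio to the trivial $1/2$, so the assumption is exactly what is needed to discard this degenerate spectrum.

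The main obstacle is making the saddle-point analysis uniform in $q$ and $\epsilon$ and extracting an explicit positive $\delta(\epsilon)$. In the binary case this is essentially the classical Frankl--R\"odl / MRRW estimate on Krawtchouk polynomials; for general $q$ one must track a two-parameter generating function, identify the dominant saddle on the correct branch, and verify that it produces a negative extremum of magnitude bounded by $q^{-\delta n} D$ rather than something weaker. If the saddle analysis proves too delicate to push through directly, a fallback route is to derive the $q$-ary bound from the binary Frankl--R\"odl theorem via a coordinate-wise random projection $[q] \to \{0,1\}$ combined with a union bound over projections and a concentration argument on the resulting binary Hamming distance; this would give a comparable, if likely weaker, $\delta(\epsilon)$.
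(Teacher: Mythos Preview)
The paper does not prove this statement at all: Theorem~\ref{thm:keevesh} is quoted from \cite{keevash2014frankl} and used as a black box in the proof of Theorem~\ref{thm:n_k_linear}. So there is no ``paper's own proof'' to compare against; any argument you supply is necessarily an addition beyond what the paper does.

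That said, your proposed route has a genuine gap. The crucial claim is that $\mu/D \le q^{-\delta n}$, where $\mu = -\min_j K_d^{(q,n)}(j)$ and $D = K_d^{(q,n)}(0)$. This is simply false in the stated range. Take $q=2$, $d$ even, and compute the eigenvalue at $j=1$:
\[
K_d^{(2,n)}(1) \;=\; \binom{n-1}{d} - \binom{n-1}{d-1} \;=\; \binom{n}{d}\Bigl(1 - \tfrac{2d}{n}\Bigr) \;=\; D\Bigl(1 - \tfrac{2d}{n}\Bigr).
\]
For any $d$ with $\tfrac{n}{2} < d < (1-\epsilon)n$ this is negative with $|K_d(1)|/D = \tfrac{2d}{n}-1$, a fixed positive constant. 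Hence $\mu/D$ is bounded below by a constant, and the Hoffman ratio $\mu/(D+\mu)$ is at least a constant, so the bound you obtain on $|\mathcal{C}|$ is only $c\cdot 2^{n}$, not $2^{(1-\delta)n}$. The parity hypothesis rules out the bipartite degeneracy at $j=n$, but it does nothing about $j=1$. Your saddle-point analysis at a single test index $j^\ast$ cannot repair this: the Hoffman bound is governed by the \emph{worst} eigenvalue, and $j=1$ already kills it.

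The known proofs of this forbidden-distance theorem (Frankl--R\"odl and the Keevash--Long sharpening) do not proceed via a single spectral inequality; they use a density-increment / regularity-type argument (and, in \cite{keevash2014frankl}, a dependent random choice / entropy argument). If you want a linear-programming proof, you would need the full Delsarte LP with a carefully designed dual polynomial, not the bare Hoffman bound; that is a substantially different and harder construction than what you outlined. Your fallback of reducing general $q$ to $q=2$ does not help either, since the binary case is exactly where the counterexample above lives.
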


In what follows, we prove Theorem \ref{thm:n_k_linear}. The proof is similar to the proof by Frankl-Rodl \cite{frankl1987} in resolution of the Galvin's Problem.

\begin{statement}
	Let $c$ be a constant such that $0 < c < \frac{1}{2}$ and $n \in \mathbb{N}$. If $cn < k < (1-c)n$, then
	\begin{align*}
	\max\Big\{\beta_{[\pm 1]}(n,k),\beta_{[\pm 1]}(n,k-1),\beta_{[\pm 1]}(n,k-2),\beta_{[\pm 1]}(n,k-3)\Big\}	\geq \delta n, 	
	\end{align*}
	where $\delta=\delta(c)$ is some real positive constant.
\end{statement}

\begin{proof}
	Consider a bisecting family $\mathcal{F}'=\{A'_1,\ldots,A'_m\}$ of minimum cardinality for $\binom{[n]}{l}$,
	where $cn < l < (1-c)n$ is even and $\frac{l}{2}$ is odd, for some constant $c$, $0 < c < \frac{1}{2}$.
	Let $X_A$ denote the 0-1 incidence vector corresponding to a set $A \subseteq [n]$.
	Let $V$ denote the vector space generated by the incidence vectors of $\mathcal{F}'$ over $\mathbb{F}_2$.
	Observe that for any $A \in \binom{[n]}{l}$, there exists an $A' \in\mathcal{F}'$ such that
	$|A \cap A'|=\frac{l}{2}$.
	Since $\frac{l}{2}$ is odd, $\left\langle X_A,X_{A'} \right\rangle=1$, i.e., $X_A \not\in V^{\perp}$,
	where $ V^{\perp}$ is the 
	subspace of the vector space $\{0,1\}^n$ over $\mathbb{F}_2$ which contains all the vectors perpendicular to $V$. 
	So, $V^{\perp}$ is a subspace containing no vector of weight $l$.
	For any $X_B,X_C \in V^{\perp}$, $X_B + X_C$ has weight $|B \triangle C| \neq l$. Moreover,
	$l$ is even.
	Since $cn < l < (1-c)n$, using Theorem \ref{thm:keevesh}, there exists an positive constant $\delta=\delta(c)$ such that $|V^{\perp}| \leq 2^{n(1-\delta)}$.
	So, $dim(V^{\perp}) \leq n - \lfloor\delta n\rfloor$.
	It follows that $dim(V) \geq \lfloor\delta n\rfloor$. 
	To complete the proof of the theorem, note that for any $k$, there exists an $l \in \{k,k-1,k-2,k-3\}$ such that $l$ is even and $\frac{l}{2}$ is odd.
	\qed
\end{proof}

\subsection{$\beta_{0}(n,k)$ and computation of bisecting families }
\label{subsec:lll}

An important probabilistic tool  used in this section is the Lov\'{a}sz local lemma \cite{loverd1975}.
Let $\mathcal{F}$ be a family of subsets of $[n]$.
The {\it dependency} of a set $A \in \mathcal{F}$
denoted by $d(A,\mathcal{F})$ 
is the number of subsets $\widehat{A} \in \mathcal{F}$, such that
(i) $|A \cap \widehat{A}| \geq 1$, and
(ii) $A \neq \widehat{A}$.   
The {\it dependency} of a family $\mathcal{F}$, denoted by $d(\mathcal{F})$ or simply $d$, is the 
maximum dependency of any subset $A$ in the family $\mathcal{F}$.
We have the following corollary of the Lov\'{a}sz local lemma from \cite{Moser:2010:CPG:1667053.1667060}.

\begin{lemma}\cite{Moser:2010:CPG:1667053.1667060} \label{cor:lll}
	Let $\mathcal{P}$ be a finite set of mutually independent random variables
	in a probability space. Let ${\cal A}$ be a 
	finite set of events determined by these variables,  where $m=|{\cal A}|$.
	For any $A \in {\cal A}$, let $\Gamma(A)$ denote the set of all the events in ${\cal A}$ that depend on $A$.
	Let $d = \max_{A \in {\cal A}}|\Gamma(A)|$.
	If 	$\forall A \in {\cal A} :P[A] \leq  p  \text{ and } ep(d+1) \leq 1$, 
	then
	an assignment of the variables not violating any of
	the events in ${\cal A}$ can be computed using expected 
	$\frac{1}{d}$ resamplings per event and
	expected $\frac{m}{d}$ resamplings in total.
\end{lemma}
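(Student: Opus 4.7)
The plan is to analyze the Moser--Tardos resampling algorithm and carry out its witness--tree / entropy--compression analysis. First I would describe the algorithm: sample each variable in $\mathcal{P}$ independently from its distribution; then while some event $A \in \mathcal{A}$ currently holds under the assignment, pick such an $A$ (say by a fixed priority on $\mathcal{A}$) and resample every variable in the scope $\mathrm{vbl}(A)$ of $A$; terminate and output the assignment when no event in $\mathcal{A}$ holds. Termination with a valid assignment is immediate from the stopping rule, so the entire content of the proof is the bound on the expected number of resampling steps.

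The main combinatorial object is the \emph{witness tree}. To each resampling step $t$ at which event $A$ is resampled, I would associate a rooted tree $\tau(t)$ whose vertices are labelled by events, built as follows: initialize $\tau$ with a single root labelled $A$; then process the earlier steps $t-1, t-2, \ldots, 1$ in reverse, and for the event $B$ resampled at step $s$ attach $B$ as a new child of a vertex of maximal depth whose label $C$ satisfies $B \in \Gamma(C) \cup \{C\}$, if such a vertex exists, and otherwise skip step $s$. Two standard facts would then be verified: (i) distinct resampling steps produce distinct witness trees, so counting witness trees bounds the number of resamplings; and (ii) for any fixed proper witness tree $\tau$ with vertex set $V(\tau)$, the probability that $\tau$ is ever produced by the algorithm is at most $\prod_{v \in V(\tau)} P[A_v]$.

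Granting (i) and (ii), the expected number of times event $A$ is resampled is at most $\sum_{\tau}\prod_{v \in V(\tau)} P[A_v]$, where the sum ranges over proper witness trees rooted at $A$. Because the children of any vertex carry pairwise distinct labels from $\Gamma(\cdot) \cup \{\cdot\}$, a set of size at most $d+1$, this sum is dominated by the total weight of a Galton--Watson branching process in which each node independently spawns, for each of at most $d+1$ possible labels, a child of weight $p$. The hypothesis $ep(d+1)\le 1$ makes this process subcritical; a standard generating-function computation (as in Moser--Tardos) bounds the total weight per root by $\tfrac{1}{d}$, and summing over the $m$ possible roots gives the claimed $\tfrac{m}{d}$ total expected resamplings.

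The main obstacle is establishing property (ii): one must argue that, despite the adversarial rule for choosing which event to resample, the variable-blocks consumed at the steps corresponding to the vertices of $\tau$ can be treated as independent fresh draws from $\mathcal{P}$, so that each event $A_v$ indeed fails with probability at most $p$ on ``its'' block. The standard device to overcome this is the \emph{Random Table} coupling: pre-sample, for every variable $X \in \mathcal{P}$, an infinite i.i.d.\ stack of values, and have each resampling of $X$ consume the next value on $X$'s stack. Reading the witness tree from the leaves to the root then exposes, for each vertex $v$, an independent fresh block of variable values on which $A_v$ must hold, yielding the required factorized bound and closing the argument.
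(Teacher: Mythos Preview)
Your sketch is a faithful outline of the Moser--Tardos witness-tree argument and would, if carried out, prove the lemma as stated (in the symmetric case the choice $x_A = \tfrac{1}{d+1}$ indeed yields the $\tfrac{1}{d}$ per-event and $\tfrac{m}{d}$ total expected resampling bounds). However, the paper does not prove this lemma at all: it is quoted verbatim as an external result from \cite{Moser:2010:CPG:1667053.1667060} and used as a black box in the proof of Theorem~\ref{thm:lll}. So there is nothing to compare against; you have simply supplied a proof where the paper supplies a citation.
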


\subsubsection*{Proof of Theorem \ref{thm:lll}}
\begin{statement}
	For a family $\mathcal{F}$ consisting of $k$-sized subsets of $[n]$ and dependency $d$,
	$\beta_{[\pm 1]}(\mathcal{F}) \leq \frac{\sqrt{k}}{c}(\ln(d+1)+1)$, where $c = 0.67$.
\end{statement}
\begin{proof}
	
	Let $\mathcal{F}$ be a family of $k$-sized subsets of $[n]$, $\mathcal{F} \subseteq \binom{[n]}{k}$, with dependency $d$.
	Assume that $k$ is even.
	Consider a family $\mathcal{F}'=\{A_1',\ldots,A_t'\}$:
	each $A_j' \in \mathcal{F}'$ is a random subset of $[n]$ where each point $x \in [n]$ is chosen into $A_j'$ 
	independently with probability $\frac{1}{2}$.
	Let $p$ be the probability that a fixed subset $A \in \mathcal{F}$ is bisected by some $A_j' \in \mathcal{F}'$.

	\begin{align*}
	p=\frac{\binom{k}{\frac{k}{2}}}{\binom{k}{0}+\binom{k}{1}+\ldots+\binom{k}{k}} 
	\geq  \frac{c}{\sqrt{k}} \text{, where } c=0.67.
	\end{align*}
	So, the failure probability that $A$ is not bisected by $A_j'$ is $1-p$ which is at most 
	$1-\frac{c}{\sqrt{k}}$.
	Therefore, the failure probability that $A$ is not bisected by any $A_j'\in \mathcal{F}'$ is $(1-p)^t$ which is at most $(1-\frac{c}{\sqrt{k}})^t \leq e^{-\frac{ct}{\sqrt{k}}}$.
	Using Lemma \ref{cor:lll}, we get $t \geq \frac{\sqrt{k}}{c}(\ln(d+1)+1)$.
	This implies that there exists a bisecting family for any family $\mathcal{F}$ of $k$-sized sets
	of size $\frac{\sqrt{k}}{c}(\ln(d+1)+1)$, where $d$ denotes the dependency of family $\mathcal{F}$.
	
	In fact, if $\mathcal{F}$ is $\binom{[n]}{k}$ and we choose the subsets $A_j' \in \mathcal{F}'$ of cardinality exactly $\frac{n}{2}$
	uniformly and independently at random
	from $\binom{[n]}{\frac{n}{2}}$, then $p = \frac{\binom{\frac{n}{2}}{\frac{k}{2}}^2}{\binom{n}{k}}\geq c_1\sqrt{\frac{n}{(n-k)k}}$ $(c_1 \geq 0.53)$.
	Therefore, the failure probability that $A$ is not bisected by any $A_j'\in \mathcal{F}'$ is $(1-p)^t$.
	Using Lemma \ref{cor:lll}, we can compute a
	bisecting family for $\binom{[n]}{k}$
	of size $\frac{1}{c_1} \sqrt{\frac{k(n-k)}{n}}(\ln(d+1)+1)$.
	Therefore, using Observation \ref{obs:ineq:2},
	$\beta_{[\pm 1]}(n,k)$ is  $O((\ln(d+1)+1))$-approximable.
	
	The proof for the case when $k$ is odd is similar to the above proof. In fact, we get a small constant factor improvement over
	the bound given in Theorem \ref{thm:lll}.
	\qed
\end{proof}
Let $m=|\mathcal{F}|$. Since, $d+1 \leq m \leq \binom{n}{k} < (\frac{en}{k})^k$,
we get, 
$\beta_{[\pm 1]}(n,k) \leq \frac{1}{c_1} \sqrt{\frac{k(n-k)}{n}}(\ln m+1) \leq \frac{k}{c_1} \sqrt{\frac{k(n-k)}{n}}\ln (\frac{en}{k})$.

\section{Discussion and open problems}
\label{sec:con}

The discrepancy interpretation of bisecting families leads us to the investigation of $\beta_{[\pm 1]}(\mathcal{F})$
for recursive Hardamard set systems.
\subsection*{Bisecting families for Hadamard set systems}
\begin{defn}
	A Hadamard matrix $H$ is a $n \times n$ matrix with 
	(i) each entry being either $+1$ or $-1$, and
	(ii) any two distinct columns being orthogonal, i.e., $H^TH=nI$, where $I$ is the $n \times n$ identity matrix.
\end{defn}
By convention, the first row and first column of $H$ are all ones.
By a recursive construction, $H(k)$ of size $2^k \times 2^k$ can be obtained from $H(k-1)$ of size $2^{k-1} \times 2^{k-1}$ as follows:
\begin{align*}
H(k)= \left[ \begin{array}{cc}
H(k-1) & H(k-1) \\
H(k-1)  & -H(k-1)  \\
\end{array} \right],
\end{align*}
where $H(0)=1$.
Note that except the first row, every other row of the Hadamard matrix $H(k)$ must contain equal number of
1's and -1's, since the columns are orthogonal and $H(k)$ is symmetric.
Let $A=\frac{1}{2}(H(k)+J(k))$, where
$J$ is the $2^k \times 2^k$ matrix whose every entry is $+1$.
The matrix $A$ corresponds to the Hadamard set system $HF(k)$,
where $HF(k)=\{A_1,\ldots,A_{2^k}\}$, and,
$j \in A_i$ if and only if the $(i,j)$ entry of $A$ is one.
So, from construction, every subset $A_j \in HF(k)$ except $A_1$ is of cardinality exactly $2^{k-1}$.
It is a well known fact that a Hadamard set system $HF$ of order $n \times n$ has a discrepancy at least 
$\frac{\sqrt{n-1}}{2}$ \cite[p. 106]{mat1999}. 
Therefore, $\beta_{[\pm 1]}(HF(k)) \geq 2$.
In what follows, we show that $\beta_{[\pm 1]}(HF(k)) \leq 2$ for all Hadamard set systems obtained from the recursively constructed Hadamard matrix $H(k)$, $k>1$.
Consider the Hadamard set system $HF(k)$, which is represented by the incidence matrix
$A$. Let $B_1=\{1,\ldots,2^{k-1}\}$.
Observe that $A_1$ through $A_{2^{k-1}}$ of $HF(k)$ are bisected by $B_1$ due to the recursive construction.
$A_{2^{k-1}+1}$ represented by the $2^{k-1}+1$th row of $A$ is not bisected by $B_1$.
In fact, $|A_{2^{k-1}+1} \cap B_1| - |A_{2^{k-1}+1}  \cap ([2^k] \setminus B_1)| = 2^{k-1}$. 
The subsets $A_{2^{k-1}+2}$ through $A_{2^{k}}$ of $HF(k)$ are bisected by $B_1$
since every row, except the first row, of $H(k-1)$  and  $-H(k-1)$
contain equal number of 1's and -1's.
$A_{2^{k-1}+1}$ represented by the $2^{k-1}+1$th row of $A$ can be  bisected by a second subset $B_2=\{1,\dots,2^{k-2}\}$.
So, this establishes $\beta_{[\pm 1]}(HF(k))=2$, $k>1$.

From the above discussion, it is clear that discrepancy of a set system $\mathcal{F}$ can be arbitrarily large as compared to
$\beta_{[\pm 1]}(\mathcal{F})$.
On the other extreme, we know that  discrepancy  of a family of 2-sized subsets $\mathcal{F}$ of $[n]$ cannot exceed 2, 
whereas $\beta_{[\pm 1]}(\mathcal{F})$ can be as large as $\log n$.
Thus, there exists families $\mathcal{F}$ and $\mathcal{G}$ where $\beta_{[\pm 1]}(\mathcal{F})$ and $disc(\mathcal{G})$ are
constants whereas $disc(\mathcal{F})$ and $\beta_{[\pm 1]}(\mathcal{G})$ are arbitrarily large.
However, this does not rule out a possible relationship between these two parameters and other hypergraph parameters.
One possibility of making progress in this direction is obtaining tight upper and lower bounds for $\beta_{[\pm 1]}(\mathcal{F})$.
Recall that the discrepancy of a family $\mathcal{F}$ is the minimum $i \in \mathbb{N}$ such that $\beta_{[\pm i]}(\mathcal{F})\leq 1$.
Below, we demonstrate the usage of such tight bounds where $\mathcal{F}=2^{[n]}$ and $n$ is a power of 2.
From Theorem \ref{thm:Knn}, we have, $\frac{n}{2} \geq \beta_{[\pm 1]}(n) \geq 2\beta_{[\pm 2]}(n) \geq \cdots \geq 2^j \beta_{[\pm 2^j]}(n)$.
So, when $j=\log (\frac{n}{2})$, we get, $\beta_{[\pm 2^j]}(n) \leq 1$. This gives a known trivial upper bound for $disc(\mathcal{F})$.

As mentioned in the introduction, $\beta_{[\pm 1]}(E)$ is $\lceil \log \chi(G)\rceil$ for a graph $G(V,E)$. 
We know that it  is impossible to approximate the chromatic number
of graphs on $n$ vertices within a factor of $n^{1-\epsilon}$ for any 
fixed $\epsilon > 0$, unless $NP \subseteq ZPP$ (see Feige and Killian \cite{Feige1998187}).
Therefore, it is not difficult to see that under the
assumption $NP \not\subseteq ZPP$, no polynomial time algorithm can approximate $\beta_{[\pm 1]}(E)$ for an $n$-vertex graph $G(V,E)$ within an additive approximation factor of $(1-\epsilon)\log n-1$, for any fixed $\epsilon> 0$.

In Section \ref{sec:prelim}, we have seen that $\beta_D(n,k)$ is not monotone with $k$ in general.
However, it is possible that $\beta_D(n,k)$ is  monotone with $k$ in certain ranges, say when $ k \leq \frac{n}{2}$.
In Section \ref{subsec:idist}, we established the lower bound of $\frac{n-i+1}{2}$  for $\beta_i(n)$. However, 
the best upper bound we have for this case is just $n-i+1$. So, there is a gap between the lower and upper bounds for
$\beta_i(n)$.

\section*{Acknowledgements}
The research of the third author is supported by the doctoral fellowship program of Ministry of Human Resources and Development, Govt. of India.

\section*{References}

\end{document}